\DeclareMathOperator*{\argmax}{arg\,max}
\newtheorem{theorem}{Theorem}[section]
\newtheorem{corollary}[theorem]{Corollary}
\newtheorem{lemma}[theorem]{Lemma}
\newtheorem{proposition}[theorem]{Proposition}
\theoremstyle{definition}
\newtheorem{definition}[theorem]{Definition}
\newtheorem{assumption}[theorem]{Assumption}
\newcommand{\Rd}{\mathbb{R}^d}
\newcommand{\R}{\mathbb{R}}
\title{Benamou-Brenier Formulation of Optimal Transport for Nonlinear Control Systems on $\Rd$} % Only the first word and proper nouns should be capitalized
\author{Karthik Elamvazhuthi}
\begin{document}
\date{}
\maketitle
\begin{center}
\textit{Applied Mathematics and Plasma Physics Group \\Los Alamos National Laboratory \\ Los Alamos, USA} 
\end{center}
% Enter the first author's name and address:

\bigskip

%The abstract of your paper
\begin{abstract}
In this paper we consider the Benamou-Brenier formulation of optimal transport for nonlinear control affine systems on $\Rd$, removing the compactness assumption of the underlying manifold in previous work by the author. By using Bernard's Young measure based weak formulation of optimal transport, the results are established for cases not covered by previous treatments using the Monge problem. Particularly, no assumptions are made on the non-existence of singular minimizing controls or the cost function being Lipschitz. Therefore, the existence of solutions to dynamical formulation is established for general Sub-Riemmanian energy costs not covered by literature previously. 

The results also establish controllability of the continuity equation whenever the corresponding Kantorovich problem admits a feasible solution, leveraging the equivalence between the Kantorovich and Benamou-Brenier formulations. Furthermore, when the cost function does not admit singular minimizing curves, we demonstrate that the Benamou-Brenier problem is equivalent to its convexified formulation in momentum and measure coordinates. In this regular setting, we further show that the constructed transport solutions possess sufficient regularity: for the feedback control laws that achieve transport, the associated continuity equation admits a unique weak solution. These findings apply in particular to linear-quadratic costs for controllable linear time-invariant (LTI) systems, as well as to certain classes of driftless nonlinear systems. Thus, in these cases, controllability of the continuity equation is achieved with control laws regular enough to guarantee uniqueness of solutions.
\end{abstract}

%The title of your section 1
%%%%%%%%%%%%%%%%%%%%%%%%%%%%%%%%%%%%%%%%%%%%%%%%%%%%%%%%%%%%%%%%%%%%%%%%%%%%%%%%

\section{Introduction}\label{section2}

In this paper, we are interested in optimal transport problems associated with costs arising from control problems. Towards this end, let $c:\Rd \times \Rd \rightarrow \R \cup \{\infty\}$ be a cost function. The optimal transport problem has two classical {\it static formulations}, known as the {\it Monge problem} and the {\it Kantorovich problem} \cite{villani2003topics}. Given probablity measures $\mu_0$ and $\mu_T$, the Monge problem is the following,
 \begin{eqnarray}
\label{eq:mong}
C_{\rm mon}(\mu_0,\mu_T) :=\inf_{T:\Rd \rightarrow \Rd} \int_{x \in \mathbb{R}^d} c(x,T(x))dx \\
s.t. ~~ T_{\#} \mu_0 = \mu_T
\end{eqnarray}
 where $T_{\#}\mu$ is the measure defined by
	\begin{equation}
	(T_\# \mu)(\Omega) = \mu(T^{-1}(\Omega)), 
	\end{equation}
	for all Borel measurable sets $\Omega \subseteq \mathbb{R}^d$. 

 Additionally, one has the Kantorovich formulation, 

 \begin{equation}
 \label{eq:kant}
C_{\rm kan}(\mu_0,\mu_T) =\inf_{K \in \Pi} \int_{\Rd \times \Rd} c(x,y) d \gamma(x,y)
 \end{equation}
 where $\Pi = \{\eta \in \mathcal{P}(\Rd \times \Rd) = \pi^1_{\#} \eta = \mu_0,~ \pi^2_{\#} \eta = \mu_T\}$ and $\pi^1 :\mathbb{R}^d \times \Rd \rightarrow \Rd$, $\pi^2 :\mathbb{R}^d \times \Rd \rightarrow \Rd$ are the projection maps into the first and second coordinate, respectively.
 
To consider costs $c(x,y)$ that arise from optimal control problems, we need some additional definitions. Suppose $f_i:\mathbb{R}^d \rightarrow \mathbb{R}^d$ are smooth vector fields for $i =0,...,n$, with the interesting case being when $n <d$. Consider the following finite-dimensional control system on $\mathbb{R}^d$,
	\begin{equation}
	    \dot{\omega}(t) = f_0(\omega(t))+ \sum_{i=1}^n u_i(t) f_i(\omega(t))
	    \label{eq:ctrsyspre}
	\end{equation}
	where $\omega(t)$ represents the state and $u(t):=[u_1(t),...u_n(t)]^T$ are the control inputs of the system.
Control systems of this type are said to be in control-affine form and are well-studied in control theory literature \cite{agrachev2013control}. 

Let $T>0$. Given $x,y \in \mathbb{R}^d$, a standard instance of the optimal control problem for this system is to solve the following {\it fixed end-point  Lagrange} problem,
\begin{equation}
\label{eq:OCP}
c(x,y)=
\inf_{\omega, u} J(\omega,u) := \int_0^T\ L(t,\omega(t),u(t))dt 
\end{equation}
subject to \eqref{eq:ctrsyspre} and the constraints 
\begin{equation}
\omega(0) =x ~~\omega(T)=y 
\end{equation}
where $L :[0,T] \times \Rd \times \R^n \rightarrow \R$ is the running cost.

The optimal transport problem for these classes of costs has been addressed in \cite{agrachev2009optimal,hindawi2011mass,rifford2014sub,chen2016optimal} under various class of assumptions on the dynamics \eqref{eq:ctrsyspre} and regularity. The paper \cite{agrachev2009optimal} established existence of solutions to the Monge problem when the control system is driftless and does not admit sharp minimizers. The result also applied to the case of linear quadratic costs for linear time invariant systems, which was was addressed in more detail in \cite{hindawi2011mass}. The result in \cite{figalli2010mass} proved that the Monge solution for minimum energy control of driftless systems admits a unique minimizer even when only singular minimizers are absent, a condition weaker than the one assumed in \cite{agrachev2009optimal}. 

The optimal transport problem that we are interested in is a variation of the above problem where the initial and final condition of the state $\omega(t)$ are represented by probability densities $\rho_0:\mathbb{R}^d \rightarrow \mathbb{R}$ and $\rho_T:\mathbb{R}^d \rightarrow \mathbb{R}$. 
Suppose that the controls $u$ are given in {\it feedback form}, using functions $u(\omega(t),t)$ of the variable $\omega(t)$. Then given by the following system of equations,
\begin{equation}\label{eq:FP}
\begin{cases}
\partial_t \rho+ \nabla \cdot \left(  f_0\rho
\right) +  \sum_{i=1}^n \nabla \cdot \left( u_i f_i \rho \right) =0, ~ &\text{in}~ \mathbb{R}^d \times [0,T]\\
\rho(0,\cdot)= \rho_0, \quad \rho(T,\cdot)= \rho_T ~ &\text{in}~  \mathbb{R}^d.
\end{cases}
\end{equation}
Subject to these constraints, the optimal transport problem that we wish to solve is the following optimization problem,
\begin{equation}\label{eq:main_optim}
\begin{split}
\inf_{u,\rho} \int_0^T\int_{\mathbb{R}^d}   L(t,x,u(t,x)) \rho(t,x)dx dt 
\end{split}
\end{equation}
The advantage of this continuum formulation, over the Monge-Kantorovich formulation of the optimal transport problem, as demonstrated by Benamou-Brenier \cite{benamou2000computational} for the single integrator case ($\dot{\omega}(t) = u$), is that for certain running costs $L(t,x,u)$ the problem can be convexified using a suitable change of variables. For example, if $L (t,x,u)  =|u|^2$. Then considering the variables $(m,\rho)$, where $m= u \rho$, leads to a convex optimization problem,
\textcolor{black}{
\begin{equation}
\label{eq:treq}
\inf_{m,\rho} \int_0^T\int_{\mathbb{R}^d} \frac{1}{2} \frac{|m(t,x)|^2}{\rho(t,x)} dx dt 
\end{equation}}
subject to the linear constraints,
\begin{equation}
\label{eq:ctqcnt}
\begin{cases}
\partial_t \rho+ \nabla \cdot \left(  f_0\rho
\right) +  \sum_{i=1}^n \nabla \cdot \left( f_i m_i \right) =0, ~ &\text{in}~ \mathbb{R}^d \times [0,T]\\
\rho(0,\cdot)= \rho_0, \quad \rho(T,\cdot)= \rho_T ~ &\text{in}~  \mathbb{R}^d.
\end{cases}
\end{equation}

In \cite{chen2016optimal}, the authors addressed this  dynamical version of the optimal transport problem for minimum energy costs associated with linear control systems. In \cite{caluya2020finite}, this result has been leveraged to consider dynamical transport of nonlinear systems that are feedback linearizable. In \cite{elamvazhuthi2023dynamical}, the case of general nonlinear systems on compact manifolds was addressed.  

Another related problem, the controllability of the continuity equation \eqref{eq:ctqcnt} with control $u(t,x)$ has been an important area of research \cite{brockett2012notes,duprez2019approximate,raginsky2024some}. When the control system is nonlinear and driftless, for positive initial and final distributions, one can conclude controllability either using properties of the corresponding sub-Laplacian \cite{khesin2009nonholonomic,arguillere2017sub,elamvazhuthi2024linear} or by posing controllability problem in terms of the corresponding diffeomorphisms \cite{agrachev2009controllability}.

The goal of this paper is to revisit the Benamou-Brenier formulation of the optimal transport problem for general nonlinear control systems and to explore issues of controllability of the continuity equation through the lens of optimal transport.

We begin by considering the state space to be $\mathbb{R}^d$, thereby removing the compactness assumption imposed in \cite{elamvazhuthi2023dynamical}. Furthermore, we relax the strong reachability assumption made in \cite{elamvazhuthi2023dynamical}, where it was required that every point in the support of the target measure be reachable from every point in the support of the initial measure. Beyond these extensions, the arguments developed here significantly simplify the methodology of \cite{elamvazhuthi2023dynamical}. In particular, we establish a direct equivalence between the Kantorovich problem and the fluid-dynamical Benamou-Brenier formulation. See Theorem \ref{thm:equivkbb}. To the best of the author's knowledge, such an equivalence has not been previously established in the nonlinear setting considered in works such as \cite{agrachev2009optimal,figalli2010mass,hindawi2011mass,chen2016optimal}. A key novelty enabling this is the use of point-to-point controllability to explicitly construct dynamically transporting measures between arbitrary probability measures. See Proposition \ref{prop:marg2yng}.

In addition, under structural assumptions such as the absence of singular minimizing curves (for instance, in 2-generating driftless systems or controllable LTI systems), we leverage the existence of Monge solutions \cite{agrachev2009optimal,figalli2010mass,hindawi2011mass} to show that the Benamou-Brenier problem can be convexified by passing to momentum and measure coordinates. This reformulation renders the problem convex. Notably, a proof  this kind of convexification was previously only known for linear systems with minimum-energy costs \cite{chen2016optimal}. See Theorem \ref{thm:conbb}.

Finally, we address the question of uniqueness of solutions to the continuity equation under the constructed control laws. Assuming again the absence of singular minimizing curves, we show that the feedback controls obtained are regular enough to ensure uniqueness of weak solutions. This is a significant contribution, as it has remained an open issue in the literature on optimal transport with control costs. A key challenge is that optimal controls are typically not Lipschitz continuous in space---a condition that usually ensures uniqueness. While one might hope to apply general results such as those in \cite{ambrosio2014continuity}, optimal controls rarely satisfy the necessary regularity conditions. In particular, even if the controls have bounded variation (a property weaker than differentiability in the sense of Sobolev), it is still difficult to control the divergence of the associated vector fields, which makes it not possible to apply existing results on well posedness of the continuity equation. For example, in \cite{duprez2019approximate} it is shown that under such regularity conditions exact controllability is not possible. To overcome these difficulties, we adopt a different strategy by we exploiting the structure of the associated Hamilton-Jacobi-Bellman (HJB) equation, following techniques inspired by \cite{cannarsa2004semiconcave, cannarsa2008semiconcavity, rifford2009stabilization}. See Theorems \ref{thm:uniquesol1} and \ref{thm:uniquesol2}.

\section{Notation and Assumptions}
In this section, we define some notation that will be used for the analysis later. Firstly, to address the problem of well-posedness of the optimal transport problem, instead of densities $\rho(t,x)$, we will work with measures that are not necessarily absolutely continuous with respect to the Lebesgue measure. 

Let $U \subseteq \R^n$ be the control set. For notational convenience we will define the control-dependent vector-field $f: \mathbb{R}^d \times U \rightarrow \Rd$, given by,
\begin{equation}
    f(x,u) = f_0(x)+ \sum_{i=1}^n u_i f_i(x)
\end{equation}
for each $(x,u) := (x, [u_1...u_n]^T) \in \Rd \times U $. 

We will need an appropriate notion of the solution of the continuity equation \eqref{eq:FP}. Towards this end, given a topological space $X$, we will denote by $\mathcal{P}(X)$ the set of Borel probability measures on $X$. We will often use the {\it narrow topology} on the set $\mathcal{P}(X)$, which is the coarsest topology such that the maps $f \mapsto \int_{X}f(x) d\mu(x)$ are continuous for all $f \in C_b(X)$, where $C_b(X)$ is the set of bounded continuous functions on $X$. For $p \geq 1$. We will need $\mathcal{P}_p(\Rd)$, the set of probability measures with finite $p^{th}$ moment defined by

\[\mathcal{P}_p(\Rd) : = \{ \mu \in \mathcal{P}(\Rd); \int_{\Rd}|x|^p d\mu(x)\}\]

We will say that a sequence $\mu_n \in \mathcal{P}(X)$ is narrowly converging to $\mu \in \mathcal{P}(X)$ if $\int_{X}f(x) d\mu_n(x) \rightarrow \int_{X}f(x) d\mu(x)$ for all $f \in C_b(X)$.
The solutions of the PDE \eqref{eq:FP} will be considered in the following sense. Let $I = [0,T]$ for $T>0$. We will say that a narrowly continuous curve $\mu :I \rightarrow \mathcal{P}(\Rd)$, i.e., continuous from $I$ to the narrow topology on $\mathcal{P}(\Rd)$, solves the PDE 
\begin{equation}\label{eq:FPM}
\partial_t \mu+  \nabla \cdot \left(f(\cdot, u)) \mu \right) =0, ~ \text{in}~ \Rd \times I
\end{equation}
in a weak sense, with initial and terminal conditions, $\mu_0 \in \mathcal{P}(\Rd)$ and $\mu_T \in \mathcal{P}(\Rd)$ respectively, if the following holds,
    \begin{eqnarray}
    \int_{I }\int_{\Rd} [ \partial_t \phi + \partial_x\phi \cdot f(x,u(t,x))]  d\mu_t(x)dt \nonumber \\= \int_{\Rd}\phi(T,x)d\mu_T(x) - \int_{\Rd}\phi(0,x)d\mu_0(x) 
    \label{eq:weTaq}
    \end{eqnarray}
    for all compactly supported once differentiable functions $\phi \in C^{1}_c (I \times \Rd)$,  where, $\partial_x \phi $ denotes the differential of $\phi $. Therefore, the optimization problem of interest is the following,
\begin{align}\label{eq:main_optimbb}
C_{BB}(\mu_0,\mu_T):=\inf_{u_i,\mu} \int_I\int_{M}   L(t,x,u(t,x)) d\mu_t(x)dt
 \nonumber \\
\text{subject to the constraint \eqref{eq:weTaq}}
\end{align}

   We will also need some assumptions on the running cost function $L(t,x,u)$. Following are the well-known as the {\it Tonelli conditions} on the Lagrangian $L(t,x,u)$ that are commonly used to establish existence of solutions of optimal control problems \cite{clarke2013functional}. Similarly, they will also play key role in establishing existence of solutions of the optimal transport problem.
   
\begin{assumption}
The running cost function $L:[0,T] \times \Rd \times U \rightarrow \mathbb{R}$ satisfies the following conditions,
\begin{enumerate}

 \item \textbf{(Continuity)} The running cost $L(t,x,u)$ is continuous.
 \item  \textbf{(Coercivity)} The set $U$ is closed and convex, and either one of the conditions hold.
 \label{asmp:costco}
  \begin{enumerate}
  \item 
 There exists $p > 1$, $\alpha> 0$ and $\beta \in \R$ such that $L(t,x,u) \geq \alpha |u|^p +\beta$ for all $ (t,x,u) \in [0,T] \times \Rd \times U$.
 \item The set $U$ is compact.
 \end{enumerate}
 \item  \textbf{(Convexity)}  The function $L(t,x, \cdot)$ is convex for each $(t,x) \in [0,T] \times  \Rd$.
\end{enumerate}
\end{assumption}

\begin{comment}
\begin{assumption}
(Cannarsa)
For any $x$, the function $u \rightarrow L(x,u)$ is of class $C^2$, and $(x,u) \rightarrow D^2_uL(x,u)$ is continuous on $\Rd \times \mathbb{R}^m$ with positive definite values.
\end{assumption}

\begin{assumption}
(Bardi) $f(x,u)$ is uniformly Lipschitz continuous with respect to $u$. That is there exists $L$ such that 
\[|f(x)-f(y)| \leq L|x-y|\] 
for all $x,y \in \Rd$ for all $u \in U$.
\end{assumption}
\end{comment}
\begin{assumption}
The drift and control vector fields satisfy the following assumption.
\begin{enumerate}
\label{asmp:sublin}
\item There vector fields $f_i$  are $C^1(\Rd;\Rd)$ for each $i =0,...,n$. 
\item The vector fields $f_i$ have sublinear growth for each $i =0,...,n$. That is, there exists $M>0$ such that 
\[|f_i(x)| \leq M(|x|+1)\]
for all $x \in \Rd$ for $i = 0,..,n$.
\end{enumerate}
\end{assumption}

    In order to address the issue of existence of solutions to \eqref{eq:weTaq}-\eqref{eq:main_optimbb}, we will first consider a {\it relaxed} version of the problem, where instead of looking for control laws that assign to each $(t,x)$ a fixed control in $U$, we search instead for {\it Young measure} \cite{young2000lectures,florescu2012young} that assigns to each $t \in [0,T]$ a Borel probability measure on $\Rd \times U$. Towards this end, let $X$ be a topological space and $\mathcal{B}(X)$, the Borel sigma algebra. 
    We will need the projections maps $\pi^t:I \times \Rd \times U \rightarrow I$, $\pi^x:I \times \Rd \times U \rightarrow \Rd$ and $\pi^u:I \times \Rd \times U \rightarrow U$ defined by 
    
    \begin{align}
    \pi^t(t,x,u) = t, ~~
    \pi^x(t,x,u) = x, ~~
     \pi^u(t,x,u) = u \nonumber
    \end{align}
    for all $(t,x,u) \in I \times \Rd \times U$.
    We denote by $\mathcal{Y}(I;X) = \{ K \in \mathcal{P}(I \times X); \pi^t_{\#}K = {\rm leb}\}$,  where ${\rm leb}$ is the Lebesgue measure on $I$.  Note that given any $K \in \mathcal{P}(I \times X)$ with marginal $\pi^t_{\#}K = {\rm leb}$ , there exists a corresponding {\it disintegration} $K_t$ such that 
    \[ \int_{I \times X}f(t,x)dK(t,x) = \int_{I}\int_{X} f(t,x) dK_t(x)dt \]
   for all functions $f \in C_b(I \times X)$. Therefore, when me make the abuse of notation $K_t$, we mean the disintegration of $K$ with respect to the time variable evaluated at $t$. In this way, we can identify the subset of $\mathcal{Y}(I;X)$ with the set of measurable maps
    $I \ni t\mapsto K_t(\cdot) \in \mathcal{P}( X)$. By measurable, we mean that the function $t \mapsto K_t(A)$ is measurable for each Borel set $A \in \mathcal{B}(X)$. We will often use the narrow topology on $\mathcal{Y}(I;X)$, which is the smallest topology such that the functional $f \mapsto \int_0^T \int_{X} f(x)dK(t,x)$ is continuous for all $f \in C_b(X)$. We will say that a sequence $K^n \in \mathcal{Y}(I;X)$ is narrowly converging to a limit $K \in \mathcal{Y}(I;X)$ if $\int_{I \times X} f(t,x)dK^n(t,x) =\int_{I} \int_X f(t,x)dK^n_t(x)dt \rightarrow  \int_{I \times X} f(x)dK(t,x) = \int_I \int_{X} f(t,x)dK_t(x)$ for all $f \in C_b( X)$. 
  
 We will first establish that, for given $\mu_0 \in \mathcal{P}(\Rd)$ and $\mu_T \in \mathcal{P}(\Rd)$, there exists a $K\in \mathcal{Y}(I \times \Rd \times U)$  such that
    \begin{eqnarray}
    \label{eq:wkwkeq}
    \int_{I \times \Rd \times U}  [\partial_t \phi  +\partial_x \phi \cdot f(x,u) ]dK(t,x ,u) \nonumber \\= \int_{\Rd}\phi (T,x)d\mu_T(x) - \int_{\Rd}\phi (0,x)d\mu_0(x) 
    \end{eqnarray}
    for all functions $\phi  \in C^{1}_c (I \times \Rd)$.
    Given $\mu_0,\mu_T \in \mathcal{P}(\Rd)$, $\mathcal{Y}_{\mu_0}^{\mu_T}(I \times \Rd \times U)$ we will denote the set of $K \in \mathcal{Y}(I \times \Rd \times U)$ such that \eqref{eq:wkwkeq} holds.

Let $\Gamma = C([0,T];\Rd)$. We define the control set $\mathcal{U}$ and the set of admissible trajectories $\Omega$ in the following way.  
\[\mathcal{U} = \{ u \in L^p(0,T;\R^m); u(t) \in U,~\text{for a.e}~t \in [0,T] \}.\] 
\[\Omega := \{\omega \in  \Gamma, u\in \mathcal{U}; (\omega,u) ~\text{satisfy}~ \eqref{eq:ctrsyspre}   \}
.\]
Given the set of admissible trajectory, control pairs $\Omega$, suppose $S:\mathbb{R}^d \times \mathbb{R}^d \rightarrow \Omega$ is a map. We define $S^\omega_t(x,y)$ and $S^u_t(x,y)$ as the coordinate evaluations of the map $S$ at time $t$. Particularly, if $S(x,y) = (\ell,\alpha)$, then $S^\omega_t(x,y) = \ell(t)$ and $S^u_t(x,y) = \alpha(t)$ for almost every $t \in (0,T)$. 

We define the set of optimal trajectories $G \subset \Omega \times \mathcal{U}$, defined by 

\[G = \{ (\omega,u) \in \omega \times \mathcal{U}; J(\omega,u) = c(\omega(0),\omega(1) \}\]
    
Given these definitions, we will first consider the following \textbf{Relaxed Benamou-Brenier} problem:
 \begin{equation}\label{eq:main_optimrbb}
\begin{split}
\inf_{K \in \mathcal{Y}_{\mu_0}^{\mu_T}(I \times \Rd \times U)} \int_{I \times \Rd \times U}   L(t,x,u) dK(t,x,u)dt 
\end{split}
\end{equation}

It can be seen that the above problem is a relaxation of the Benamou-Brenier problem \eqref{eq:weTaq}-\eqref{eq:main_optimbb} by noting that if $\mu_t$ is a weak solution of the PDE for the control $u:[0,T] \times M \rightarrow U$, then the Young measure $K_t$ given by $K_t (A) = \int_{A} \delta_{u(t,x)}(A)d\mu_t(x)$ for all $A \in \mathcal{B}(\Rd \times U)$, satisfies  \eqref{eq:wkwkeq}.

\section{Analysis}

In this section we present the main analysis of the paper, starting with the first contribution of the paper, which is, establishing the equivalence between the Kantororovich problem and the dynamical formulation of the problem.

\subsection{Equivalence between Kantorovich and Benamou-Brenier Formulation}

\begin{proposition}
 Given Assumption \ref{asmp:costco} (Tonelli running cost) and \ref{asmp:sublin} (vector-fields with sub-linear growth), the cost $c(x,y)$ is lower semicontinuous. Hence, a solution of the the Kantorovich problem \eqref{eq:kant} exists if the feasible set it non-empty.
\end{proposition}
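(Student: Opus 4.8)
The plan is to establish lower semicontinuity of $c$ by the direct method of the calculus of variations, and then invoke the standard existence theory for the Kantorovich problem. Fix a sequence $(x_k,y_k) \to (x,y)$ in $\Rd \times \Rd$; I must show $c(x,y) \le \liminf_k c(x_k,y_k)$. If the right-hand side is $+\infty$ there is nothing to prove, so I pass to a subsequence (not relabeled) along which $c(x_k,y_k)$ converges to a finite limit $\ell$ and stays bounded. Since each $c(x_k,y_k)<\infty$, I may select near-minimizing admissible pairs $(\omega_k,u_k)\in\Omega$ with $\omega_k(0)=x_k$, $\omega_k(T)=y_k$ and $J(\omega_k,u_k)\le c(x_k,y_k)+1/k$, so that $\sup_k J(\omega_k,u_k)<\infty$.

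First I would extract compactness. The coercivity in Assumption \ref{asmp:costco} bounds the controls: in case (a) the inequality $L\ge \alpha|u|^p+\beta$ gives a uniform bound on $\|u_k\|_{L^p}$, while in case (b) the compactness of $U$ bounds $u_k$ in $L^\infty$ and hence in every $L^p$. Feeding this together with the sublinear growth from Assumption \ref{asmp:sublin} into the integral form of the dynamics and applying Gr\"onwall's inequality yields a uniform bound $\|\omega_k\|_\infty \le R$; H\"older's inequality then makes the $\omega_k$ uniformly (H\"older-)equicontinuous. By the Arzel\`a--Ascoli theorem and reflexivity of $L^p$ (with $p>1$), after a further subsequence $\omega_k\to\omega$ uniformly and $u_k\rightharpoonup u$ weakly in $L^p$, with $\omega(0)=x$, $\omega(T)=y$, and $u(t)\in U$ a.e. (the last from convexity and closedness of $U$ via Mazur's lemma).

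Next I would pass to the limit in the constraint. Writing the dynamics in integral form, continuity of $f_0$ together with uniform convergence gives $f_0(\omega_k)\to f_0(\omega)$ uniformly, while each product $u_{k,i}f_i(\omega_k)$ converges weakly in $L^p$ to $u_i f_i(\omega)$, being the product of a weakly convergent factor and a strongly (uniformly) convergent one; hence $(\omega,u)$ is itself an admissible pair joining $x$ to $y$. The decisive step is the sequential lower semicontinuity of the functional $(\omega,u)\mapsto \int_0^T L(t,\omega(t),u(t))\,dt$ under this mixed mode of convergence (strong for the state, weak for the control). This is precisely where the continuity, convexity in $u$, and coercivity hypotheses of Assumption \ref{asmp:costco} enter, and it is the classical Tonelli-type lower semicontinuity theorem for integral functionals (see \cite{clarke2013functional}); I expect this to be the main obstacle, since weak convergence of the controls can be reconciled with the integral cost only through convexity. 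Granting it, $c(x,y)\le J(\omega,u)\le \liminf_k J(\omega_k,u_k)=\ell$, which is the claimed lower semicontinuity.

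Finally, for the Kantorovich problem I would apply the direct method. The feasible set $\Pi$ has fixed marginals $\mu_0,\mu_T$, hence is tight and narrowly compact by Prokhorov's theorem. Since $c$ is lower semicontinuous and, by the coercivity of $L$, bounded below, the functional $\gamma\mapsto\int_{\Rd\times\Rd} c\,d\gamma$ is narrowly lower semicontinuous, so whenever $\Pi$ is nonempty a minimizer exists (cf. \cite{villani2003topics}).
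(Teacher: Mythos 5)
Your proposal is correct and follows essentially the same route as the paper: extract a (near-)minimizing sequence of admissible pairs, obtain uniform convergence of the states and weak $L^p$ convergence of the controls via coercivity, sublinear growth, Gr\"onwall and Arzel\`a--Ascoli, pass to the limit in the affine dynamics, and invoke Tonelli-type weak lower semicontinuity of $J$ (the paper cites \cite{clarke2013functional} for exactly this), then conclude existence for the Kantorovich problem by the direct method. The only cosmetic difference is that you work with $1/k$-near-minimizers while the paper first invokes existence of exact optimal pairs; both are fine.
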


\begin{proof}
Let $r \in \mathbb{R}$ and $(x_n,y_n)_{n=1}^{\infty}$ be a sequence in $\Rd \times \Rd$ converging to  $(x,y) \in \Rd \times \Rd$  such that $(c(x_n,y_n))_{n=1}^{\infty} $ is a Cauchy sequence  converging to some $r \in \mathbb{R}$ that is finite. There exist optimal solutions $(\omega_n,u_n)$ corresponding to the costs $c(x_n,y_n)$ due standard existence results for optimal control problems \cite[23.11]{clarke2013functional}. We know that 
$\int_0^TL(t,\omega_n(t),u_n(t))dt$ is uniformly bounded. This implies that $u$ is uniformly bounded in $L^{p}(0,T;\R^m)$. This implies that we can take a subsequence  $(\omega_{n_m},u_{n_m})_{m=1}^{\infty}$ such that $\omega_{n_m}$ is strongly converging to some $\omega$ in $C([0,T];\Rd)$ and $u_{n_m}$ is weakly converging to $u$ in $\mathcal{U}$ as in proof of \cite[Theorem 23.11]{clarke2013functional}. We know that the functional $J$ is lower semicontinuous, which gives us
\[r = \liminf_{m \rightarrow \infty} J(\omega_{n_m},u_{n_m}) \geq J(\omega,u) \geq  c(x,y) \]
This implies that the preimage of every set of the form $ [-\infty,a)$ under the function $c$ is closed for all $a \in \R$. Therefore, $c$ is lower semicontinuous. The existence of solution now follows from classical results in optimal transport \cite[Theorem 1.7]{santambrogio2015optimal}.
\end{proof}

Using the same idea as in the previous Proposition we also have the following result.

\begin{lemma}
Given Assumption \ref{asmp:costco} (Tonelli running cost) and \ref{asmp:costco} (vector-fields with sub-linear growth), the set of optimal trajectory control pairs $G$ is closed in $\Omega$.
\end{lemma}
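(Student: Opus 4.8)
The plan is to show that $G$ contains the limit of every convergent sequence of its elements, reusing the weak lower-semicontinuity machinery from the previous Proposition. So let $(\omega_n,u_n) \in G$ converge to $(\omega,u) \in \Omega$, which I take to mean $\omega_n \to \omega$ uniformly in $\Gamma = C([0,T];\Rd)$ and $u_n \rightharpoonup u$ weakly in $L^p(0,T;\R^n)$, exactly the modes of convergence extracted in the Proposition. Since evaluation at a fixed time is continuous under uniform convergence, the endpoints converge: writing $x_n = \omega_n(0) \to x := \omega(0)$ and $y_n = \omega_n(T) \to y := \omega(T)$. Because each $(\omega_n,u_n)$ is optimal, $J(\omega_n,u_n) = c(x_n,y_n)$, and the goal is to prove $J(\omega,u) = c(x,y)$.

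First I would establish the lower bound. As in the proof of the previous Proposition, $J$ is sequentially lower-semicontinuous for the pair (uniform, weak-$L^p$) convergence, so
\[ J(\omega,u) \le \liminf_{n \to \infty} J(\omega_n,u_n) = \liminf_{n \to \infty} c(x_n,y_n). \]
On the other hand $(\omega,u) \in \Omega$ is itself an admissible trajectory-control pair joining $x$ to $y$, so by the definition of $c$ as an infimum, $J(\omega,u) \ge c(x,y)$. Combining these gives $c(x,y) \le J(\omega,u) \le \liminf_n c(x_n,y_n)$.

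The remaining, and decisive, step is the reverse estimate $\limsup_n c(x_n,y_n) \le c(x,y)$; together with the chain above it forces $J(\omega,u) = c(x,y)$, hence $(\omega,u) \in G$. This is precisely upper semicontinuity of the value $c$ at $(x,y)$, and it is here that the argument genuinely exceeds the previous Proposition, which delivered only lower semicontinuity. I would obtain it by a recovery/comparison construction: take an optimal pair $(\omega^\star,u^\star)$ for the endpoints $(x,y)$ (which exists by the same existence result cited in the Proposition), and perturb it into admissible pairs $(\omega^\star_n,u^\star_n)$ joining $x_n$ to $y_n$ whose cost satisfies $J(\omega^\star_n,u^\star_n) \to J(\omega^\star,u^\star) = c(x,y)$; since $c(x_n,y_n) \le J(\omega^\star_n,u^\star_n)$, this yields the required $\limsup$ bound.

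I expect this recovery construction to be the main obstacle. Producing the connecting pairs requires a quantitative local controllability statement, namely the ability to steer from $x_n$ to $x$ and from $y$ to $y_n$ by short control arcs whose running cost vanishes as $x_n \to x$ and $y_n \to y$, and this does not follow from the Tonelli and sublinear-growth assumptions alone; it must be supplied either by a controllability hypothesis on \eqref{eq:ctrsyspre} or by continuity of $c$ established separately. If continuity (equivalently, upper semicontinuity) of $c$ is available, the lemma is immediate from the two displayed inequalities; absent it, the genuine content of the lemma is exactly this upper-semicontinuity of $c$ along the converging endpoints, and the lower-semicontinuity of $J$ borrowed from the Proposition handles only half of the equality.
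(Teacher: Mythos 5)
Your skeleton is exactly the argument the paper has in mind: the paper offers no proof at all beyond the remark that the lemma follows ``using the same idea as in the previous Proposition,'' i.e.\ extract a subsequence with $\omega_n \to \omega$ uniformly and $u_n \rightharpoonup u$ weakly in $L^p$, and invoke sequential lower semicontinuity of $J$. Your diagnosis of where that idea stops is correct and is the valuable part of your write-up. The chain $c(x,y) \le J(\omega,u) \le \liminf_n J(\omega_n,u_n) = \liminf_n c(x_n,y_n)$ is all that the previous Proposition's machinery delivers, and since that Proposition establishes only \emph{lower} semicontinuity of $c$, the inequality $\liminf_n c(x_n,y_n) \ge c(x,y)$ points the wrong way: nothing under Assumptions \ref{asmp:costco} and \ref{asmp:sublin} alone rules out $\liminf_n c(x_n,y_n) > c(x,y)$, in which case the limit pair could have cost strictly between $c(x,y)$ and $\liminf_n c(x_n,y_n)$ and fail to lie in $G$. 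So the missing step you identify --- upper semicontinuity of $c$ along the converging endpoints, or equivalently a recovery construction steering $x_n \to x$ and $y \to y_n$ at vanishing cost --- is a genuine gap in the lemma as stated, not merely in your proposal. (A secondary imprecision, inherited from the paper, is that the topology on $\Omega$ is never specified; your choice of uniform times weak-$L^p$ is the one consistent with the compactness argument.)

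Two remarks to situate this. First, in the settings where the paper actually uses the selection from $G$ (Proposition \ref{prop:kan2bb} and its corollaries), the needed continuity of $c$ is available: for bracket-generating driftless systems $c = d^2$ is continuous by Chow--Rashevsky, and for controllable LTI systems with linear-quadratic cost $c$ is an explicit continuous quadratic form. Under continuity of $c$ your two displayed inequalities close immediately and the lemma holds; so the correct fix is either to add continuity of $c$ (or small-time local controllability with small cost) as a hypothesis of the lemma, or to restrict its statement to those cases. Second, note that if one strengthens the convergence of $u_n$ to strong $L^p$ the problem does not go away: $J(\omega_n,u_n) \to J(\omega,u)$ then gives $J(\omega,u) = \lim_n c(x_n,y_n)$, but one still needs $\lim_n c(x_n,y_n) \le c(x,y)$, i.e.\ the same upper-semicontinuity. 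Your proposal is therefore more careful than the paper's treatment; you should not regard the unfinished recovery step as a failure of your argument but as an unstated hypothesis of the lemma.
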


In the following proposition we establish that the marginal of a Young measure can be associated with a curve on the set of probability measures.

\begin{proposition}
\label{prop:cntcur}
Let $\eta \in \mathcal{Y}(I \times \Rd \times U)$ be a controlled transport measure. There exists a continuous family $\mu: I \rightarrow \mathcal{P}(\Rd)$, such that $\mu_t = \pi^x_{ \#} \eta_t$  for all $t\in I$.
\begin{proof}
Define the map $\tilde{f}:I \times \Rd \times U \rightarrow I \times \R^{2d}$ by
\[\tilde{f}(t,x,u) = (t,x,f(x,u))\]
for all $(t,x,u) \in I \times \Rd \times U$.
Let $\psi = \tilde{f}_{\#}\eta$. Then $\psi \in \mathcal{Y}(I; \R^{2d})$ satisfies,
\begin{equation}
\int_{I \times \Rd \times U} [\partial_tg + \nabla g \cdot v]d\Psi(t,x,v) = 0
\end{equation}
for all compactly supported smooth functions $g \in C^{1}_c(I \times \Rd)$. The result then follows from \cite[Lemma 5]{bernard2008young}, by noting that $\pi^x_{\#}\eta = \pi^x_{\#}\psi  $. Particularly, one has that there exists a continuous family of $\mu_t: I \rightarrow \mathcal{P}(\Rd)$, such that $\mu_t = \pi^x_{\#} \Psi_t$  for all $t\in I$. Hence, it also follows that $\mu_t = \pi^x_{\#} \eta_t$  for all $t\in I$.
\end{proof}
\end{proposition}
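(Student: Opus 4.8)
The plan is to remove the control variable by lifting $\eta$ to the state--velocity phase space, where the control-affine dynamics collapse into an ordinary (control-free) continuity equation, and then to invoke the curve-representation result of Bernard. Concretely, I would introduce the map $\tilde{f}: I \times \Rd \times U \to I \times \R^{2d}$ defined by $\tilde{f}(t,x,u) = (t,x,f(x,u))$ and set $\psi := \tilde{f}_{\#}\eta$. Because $\tilde{f}$ acts as the identity on the time coordinate and $\pi^t_{\#}\eta = \mathrm{leb}$, we get $\psi \in \mathcal{Y}(I;\R^{2d})$, a Young measure whose time-disintegration $\psi_t$ records, for each state, the distribution of admissible velocities. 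The decisive observation is that $\pi^x \circ \tilde{f} = \pi^x$, so that $\pi^x_{\#}\psi = \pi^x_{\#}\eta$ and, disintegrating in time, $\pi^x_{\#}\psi_t = \pi^x_{\#}\eta_t$ for almost every $t$. Thus any narrowly continuous representative of the $x$-marginal of $\psi$ will automatically serve as the representative sought for $\eta$.

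The next step is to check that $\psi$ solves the phase-space continuity equation. For a test function $g \in C^1_c((0,T) \times \Rd)$, the definition of the pushforward gives $\int [\partial_t g + \nabla g \cdot v]\, d\psi(t,x,v) = \int [\partial_t g(t,x) + \partial_x g(t,x) \cdot f(x,u)]\, d\eta(t,x,u)$. Since $\eta$ is a controlled transport measure it satisfies the weak formulation \eqref{eq:wkwkeq}, and choosing $g$ to vanish near the endpoints of $I$ eliminates the boundary terms on the right-hand side of \eqref{eq:wkwkeq}. Hence $\int [\partial_t g + \nabla g \cdot v]\, d\psi = 0$ for all such $g$, which is exactly the closedness hypothesis of \cite[Lemma 5]{bernard2008young}. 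Applying that lemma yields a narrowly continuous curve $t \mapsto \mu_t \in \mathcal{P}(\Rd)$ with $\mu_t = \pi^x_{\#}\psi_t$ for every $t \in I$; combined with the marginal identification above, $\mu_t = \pi^x_{\#}\eta_t$, which is the assertion.

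The step I expect to demand the most care is the passage from the almost-everywhere-defined marginals to a genuinely continuous family defined for every $t$, since this is where integrability of the velocity field enters. The field $v = f(x,u)$ is unbounded both in $x$ and in $u$ (the control set $U$ need not be compact), so one must verify the integrability required by the representation result, essentially $\int_{I \times \Rd \times U} |f(x,u)|\, d\eta < \infty$. Here Assumption \ref{asmp:sublin} is essential: the sub-linear bound $|f_i(x)| \le M(|x|+1)$ yields $|f(x,u)| \le M(|x|+1)(1+|u|)$, reducing the required estimate to a joint control of the spatial moments of the $x$-marginal against the $L^1$-size of the controls carried by $\eta$. This is precisely the integrability data that Bernard's lemma packages, and it is the only place where the structural hypotheses on the vector fields, rather than the mere continuity-equation constraint, are actually used.
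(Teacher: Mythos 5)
Your proposal follows essentially the same route as the paper's own proof: the lift $\tilde{f}(t,x,u)=(t,x,f(x,u))$, the pushforward $\psi=\tilde{f}_{\#}\eta$ satisfying the phase-space continuity equation for test functions vanishing near the endpoints, and the application of Bernard's Lemma 5 together with the identity $\pi^x_{\#}\psi=\pi^x_{\#}\eta$. Your closing paragraph on the integrability of $v=f(x,u)$ is a sensible extra point of care that the paper leaves implicit, but it does not change the argument.
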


In the following result, we establish how one can construct a Young measure with given marginals, using feasible trajectories connecting states on $\Rd$.

\begin{proposition}
\textbf{(Trajectories on $\Rd$ to Trajectories on $\mathcal{P}(\Rd)$)}
Let $\gamma \in \Pi$. Suppose $S:\mathbb{R}^d \times \Rd \rightarrow \Omega $ is a measurable map defined $\gamma$ almost everywhere on $\Rd \times \Rd$ such that 
\[S^{\omega}_0(x,y) = x, ~~ S^{\omega}_T(x,y) = y\]
for $\gamma$ almost every $(x,y)\in \Rd \times \Rd$. 
Then there exists $\eta \in \mathcal{Y}^{\mu_T}_{\mu_0}(I \times \Rd \times U)$.
\label{prop:marg2yng}
\end{proposition}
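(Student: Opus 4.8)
The plan is to construct $\eta$ explicitly by transporting the coupling $\gamma$ along the trajectories selected by $S$. Concretely, I would introduce the map $\Phi: I \times \Rd \times \Rd \to I \times \Rd \times U$ given by $\Phi(t,x,y) = (t, S^\omega_t(x,y), S^u_t(x,y))$ and set $\eta := \Phi_{\#}(\mathrm{leb} \otimes \gamma)$, where $\mathrm{leb}$ is the Lebesgue measure on $I$ appearing in the definition of $\mathcal{Y}$. Equivalently, and more robustly for the measurability issues below, I would define $\eta$ through its action on test functions $g \in C_b(I \times \Rd \times U)$ by
$$\int g\, d\eta = \int_{\Rd \times \Rd} \int_0^T g\big(t, S^\omega_t(x,y), S^u_t(x,y)\big)\, dt\, d\gamma(x,y),$$
and recover $\eta$ as a Borel measure via the Riesz representation theorem. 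Since $\Phi$ fixes the time coordinate, taking $g$ to depend only on $t$ immediately gives $\pi^t_{\#}\eta = \mathrm{leb}$, so that $\eta \in \mathcal{Y}(I \times \Rd \times U)$.

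The step I expect to be the main obstacle is the measurability needed to make this definition rigorous, since $S^u_t(x,y)$ involves pointwise-in-$t$ evaluation of the control $S^u(x,y) \in L^p(0,T;\R^m)$, which is only defined up to a.e. equivalence. I would circumvent pointwise evaluation by working directly with the time-integral functional $(\omega,u) \mapsto \int_0^T g(t,\omega(t),u(t))\,dt$ on $\Omega \subseteq \Gamma \times \mathcal{U}$. If $\omega_n \to \omega$ uniformly and $u_n \to u$ in $L^p$, then along any subsequence a further subsequence of $(u_n)$ converges a.e., so $g(t,\omega_n(t),u_n(t)) \to g(t,\omega(t),u(t))$ a.e., and dominated convergence (using that $g$ is bounded) gives convergence of the integrals; by the subsequence principle the functional is continuous on $\Omega$. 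Composing with the measurable map $S$ then shows that $(x,y) \mapsto \int_0^T g(t,S^\omega_t(x,y),S^u_t(x,y))\,dt$ is $\gamma$-measurable, so the displayed formula defines a positive, bounded, linear functional on $C_b$, hence a measure $\eta$ with $\pi^t_{\#}\eta = \mathrm{leb}$.

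It then remains to verify the weak continuity equation \eqref{eq:wkwkeq} with the prescribed endpoints. For fixed $(x,y)$ write $\omega = S^\omega(x,y)$ and $u = S^u(x,y)$; since $(\omega,u) \in \Omega$ satisfies \eqref{eq:ctrsyspre} and the vector fields have sublinear growth, $\omega$ is absolutely continuous with $\dot\omega(t) = f(\omega(t),u(t)) \in L^1(0,T;\Rd)$. Hence for $\phi \in C^1_c(I \times \Rd)$ the map $t \mapsto \phi(t,\omega(t))$ is absolutely continuous with $\tfrac{d}{dt}\phi(t,\omega(t)) = \partial_t\phi(t,\omega(t)) + \partial_x\phi(t,\omega(t)) \cdot f(\omega(t),u(t))$, so the fundamental theorem of calculus gives $\int_0^T [\partial_t\phi + \partial_x\phi \cdot f]\,dt = \phi(T,\omega(T)) - \phi(0,\omega(0)) = \phi(T,y) - \phi(0,x)$, using the endpoint constraints $S^\omega_0(x,y)=x$ and $S^\omega_T(x,y)=y$. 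Integrating this identity against $\gamma$ and invoking $\pi^1_{\#}\gamma = \mu_0$ and $\pi^2_{\#}\gamma = \mu_T$ produces exactly the right-hand side of \eqref{eq:wkwkeq}, namely $\int_{\Rd}\phi(T,y)\,d\mu_T(y) - \int_{\Rd}\phi(0,x)\,d\mu_0(x)$. This shows $\eta \in \mathcal{Y}^{\mu_T}_{\mu_0}(I \times \Rd \times U)$ and completes the construction.
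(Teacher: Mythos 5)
Your proposal is correct and follows essentially the same route as the paper: define $\eta$ as the pushforward of ${\rm leb}\otimes\gamma$ through the evaluation map $(t,x,y)\mapsto(t,S^\omega_t(x,y),S^u_t(x,y))$, check the time marginal by testing against functions of $t$ alone, and obtain the weak continuity equation by applying the chain rule along each absolutely continuous trajectory and integrating against $\gamma$. Your additional discussion of the measurability of $(x,y)\mapsto\int_0^T g(t,S^\omega_t(x,y),S^u_t(x,y))\,dt$ is a point the paper passes over silently, and is a welcome clarification rather than a divergence in method.
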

\begin{proof}
We define the measure $\eta \in \mathcal{P}(I \times R^d \times U) $ by 
\begin{equation}
\int_{I \times \Rd} f(t,x,u)d\eta(t,x,u)  =  \int_{\Rd \times \Rd}\int_{I}f(t,S^\omega_t(x,y),S^u_t(x,y))dtd\gamma(x,y).
\end{equation}
for all $f \in C_b(I \times \mathbb{R}^d \times U)$, where $S^{\omega}_t$ and $S^{u}_t$ denote the first and second argument of $S(x,y)$ evaluated at $t$, respectively. Note that $\eta = F_{\#} \gamma$ is the pushforward of the measure $\gamma$ through the map $\Rd \times \Rd \ni  (x,y) \mapsto F(x,y) := (t,S^\gamma(x,y),S^u_t(x,y)) \in I \times \Rd \times U$.

We note that $\pi^t_{\#} \eta = \lambda $, the Lebesgue measure. To see this, we compute

\begin{align}
\int_{I} f(t) d\pi^t_{\#} \eta(t) &= \int_{I \times \mathbb{R}^{2d}}f(\pi^t_{\#}(t,x,v))d\eta(t,x,v) \\
&=\int_{I \times \mathbb{R}^{2d}}f(\pi^t_{\#}(t,S^{\omega}_t(x,y),S^{u}_t(x,y)))dtd\gamma(x,y)  \\
&=\int_{I }f(t)dt 
\end{align}
This implies that there exists a disintegration $I  \mapsto \mathcal{P}(\mathbb{R}^{2} \times U)$, given by $ t\mapsto \eta_t$ such that $\eta_t \in \mathcal{P}(\mathbb{R}^{d} \times U)$ for $\lambda$ almost every $ t\in [0,T]$. 

Let $g \in C^1_c(I \times \Rd)$. We know that 
\begin{align}
\frac{d}{dt}(g(t,S^{\omega}_t(x,y)) = \partial_t g(t,S^{\omega}_t(x,y)) + \partial_x g(t,S^{\omega}_t(x,y)) \cdot f(S^{\omega}_t(x,y),S^{u}_t(x,y))
\end{align}
for almost every $t \in [0,T]$,  $\gamma$ almost every $(x,y) \in \mathbb{R}^{2d}$. Therefore, 
\begin{align}
& g(T,S^{\omega}_T(x,y)) - g(0,S^\omega_0(x,y))   \\
& = \int_I \partial_t g(t,S^{\omega}_t(x,y)) + \partial_x g(t,S^{\omega}_t(x,y)) \cdot f \big ( S^{\omega}_t(x,y),S^{u}_t(x,y) \big )dt
\end{align}

for $\gamma$ almost every $(x,y) \in \mathbb{R}^{2d}$.

Integrating both sides with respect to the measure $\gamma$, we get

\begin{align}
\int_{\Rd}g(T,x)d\mu_T - \int_{\Rd}g(0,x)d\mu_0  =\int_{\mathbb{R}^{2d}} \int_{I}[\partial_t g(t,x) + \partial_x g(t,x) \cdot f(x,u)]d\eta(t,x,u) 
\end{align}
\end{proof}

Using the above proposition we can establish that non-emptiness of the feasible set of the Kantorovich problem, implies non-emptiness of the feasible set of the Benamou-Brenier problem.

\begin{proposition}
\label{prop:kan2bb}
Given Assumption \ref{asmp:costco} (Tonelli running cost) and \ref{asmp:costco} (vector-fields with sub-linear growth).  Let $\gamma \in \Pi$ be such that $\int_{\Rd \times \Rd} c(x,y)d\gamma(x,y) < \infty$. Then there exists a controlled transport measure $\eta \in \mathcal{Y}^{\mu_0}_{\mu_1}(I \times \Rd \times U)$ such that 
\begin{equation}
\int_{I \times \Rd \times U} L(t,x,u)d\eta(t,x,u) = \int_{\Rd \times \Rd} c(x,y)d\gamma(x,y)
\end{equation}
\end{proposition}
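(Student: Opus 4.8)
The plan is to reduce the statement to Proposition \ref{prop:marg2yng} by producing a $\gamma$-measurable map $S:\Rd\times\Rd\to\Omega$ that assigns to almost every pair $(x,y)$ an \emph{optimal} trajectory--control pair joining $x$ to $y$, and then to evaluate the cost of the resulting Young measure directly. First I would note that since $\int_{\Rd\times\Rd} c\,d\gamma<\infty$, the cost $c(x,y)$ is finite for $\gamma$-almost every $(x,y)$. For each such pair the existence theory for the fixed-endpoint Lagrange problem---already invoked in the first Proposition via \cite[Theorem 23.11]{clarke2013functional}---provides an optimal pair $(\omega,u)\in G$ with $\omega(0)=x$, $\omega(T)=y$ and $J(\omega,u)=c(x,y)$. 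Thus the multifunction
\[
\Sigma(x,y):=\{(\omega,u)\in G:\ \omega(0)=x,\ \omega(T)=y\}
\]
has nonempty values for $\gamma$-almost every $(x,y)$.

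The crux is to select one such pair measurably. Here I would use that the endpoint evaluation maps $(\omega,u)\mapsto\omega(0)$ and $(\omega,u)\mapsto\omega(T)$ are continuous on $\Omega$ and that $G$ is closed (the preceding Lemma). Consequently the graph
\[
\big\{(x,y,\omega,u):\ (\omega,u)\in G,\ \omega(0)=x,\ \omega(T)=y\big\}
\]
is a Borel subset of $\Rd\times\Rd\times\Omega$ whose projection onto the first two factors contains a $\gamma$-full set. A measurable selection theorem---the Jankov--von Neumann uniformization theorem, or Kuratowski--Ryll-Nardzewski applied to the closed-valued measurable multifunction $\Sigma$---then yields a universally measurable, hence $\gamma$-measurable, map $S:\Rd\times\Rd\to\Omega$ with $S(x,y)\in\Sigma(x,y)$ for $\gamma$-almost every $(x,y)$, so that $S^{\omega}_0(x,y)=x$ and $S^{\omega}_T(x,y)=y$ as required by Proposition \ref{prop:marg2yng}.

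With $S$ in hand I would apply Proposition \ref{prop:marg2yng} to obtain $\eta=F_\#\gamma\in\mathcal{Y}^{\mu_T}_{\mu_0}(I\times\Rd\times U)$, and then compute the cost using the defining identity for $\eta$ from that proposition, together with optimality of $S(x,y)$:
\[
\int_{I\times\Rd\times U} L(t,x,u)\,d\eta
= \int_{\Rd\times\Rd}\int_I L\big(t,S^\omega_t(x,y),S^u_t(x,y)\big)\,dt\,d\gamma
= \int_{\Rd\times\Rd} J\big(S(x,y)\big)\,d\gamma
= \int_{\Rd\times\Rd} c(x,y)\,d\gamma,
\]
where the last equality holds because $S(x,y)$ is an optimal pair for $\gamma$-almost every $(x,y)$.

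I expect the measurable selection to be the main obstacle. The delicate point is the topology on $\Omega$: the closedness of $G$ is established (as in the first Proposition) using strong convergence in $C([0,T];\Rd)$ together with \emph{weak} convergence in $L^p$, and the weak $L^p$-topology is metrizable only on bounded sets. To apply the selection theorem cleanly I would stratify by the sublevel sets $\{c\le N\}$: by coercivity $c(x,y)=J(\omega,u)\ge \alpha\|u\|_{L^p}^p+\beta T$ (or, in case (b), by compactness of $U$), so on $\{c\le N\}$ the optimal controls lie in a fixed ball of $L^p$, on which the weak topology is metrizable and closed bounded sets are weakly compact. Restricting $\Sigma$ to this region gives a compact-valued measurable multifunction to which Kuratowski--Ryll-Nardzewski applies, and patching the selections over $N\in\mathbb{N}$ produces the global $\gamma$-measurable map $S$. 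Modulo this setup, the remaining steps are routine.
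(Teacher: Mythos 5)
Your proposal follows essentially the same route as the paper: select, via a measurable selection theorem, an optimal trajectory--control pair for $\gamma$-almost every $(x,y)$ (the paper invokes \cite[Theorem 6.9.13]{bogachev2007measure} for the multifunction $\Psi(x,y)=\{(\omega,u)\in G:\omega(0)=x,\ \omega(T)=y\}$), then feed the resulting map $S$ into Proposition \ref{prop:marg2yng} and compute the cost using optimality. Your additional care about metrizability of the weak $L^p$ topology and the stratification by sublevel sets of $c$ is a legitimate refinement of the selection step that the paper leaves implicit, but it does not change the argument.
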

\begin{proof}
Let $\Gamma \subseteq \Rd \times \Rd$ be defined by
\[\Gamma : = \{ (x,y) \in \Rd \times \Rd; c(x,y) <\infty\}.\] 
Let $P^G$ denote the set of all subsets of $G$. We define the multivalued mapping $\Psi : \Gamma \rightarrow P^G$ 
\begin{equation}
\Psi(x,y) = \lbrace (\omega,u) \in G; \omega(0)=x,~\omega(T) = y\rbrace 
\end{equation}
for all $(x,y) \in \Gamma$. 
By \cite[Theorem 6.9.13]{bogachev2007measure} there exists a map $S: \Rd \times \Rd \rightarrow \Omega \times \mathcal{U}$ that is measurable (with respect to the Lebesgue $\sigma$-algebra on $\Omega$ and the Borel $\sigma$-algebra on $\Omega$) such that 
\[S(x,y) \in \Psi(x,y)\] 
for all $x, y \in \Omega$. We can extend $S$ to $\Rd \times \Rd$ such that it is defined $\gamma$ everywhere. With an abuse of notation, we denote this extension by $S:\Rd \times \Rd \rightarrow \Omega \times \mathcal{U}$. The existence of $\eta \in \mathcal{Y}^{\mu_T}_{\mu_0}(I \times \Rd \times U)$ follows from Proposition \ref{prop:marg2yng}. Next, we compute $\int_{I \times \Rd \times U} L(t,x,u)d\eta(t,x,u)$,
\begin{align*}
\int_{I \times \Rd \times U} L(t,x,u)d\eta(t,x,u) &=   \int_{\Rd \times \Rd} \int_{I}L(t,S^\omega_t(x,y),S^u_t(x,y))dtd\gamma(x,y) \nonumber \\
& \int_{\Rd \times \Rd} c(S^\omega_0(x,y),S^\omega_T(x,y))d\gamma(x,y) \nonumber \\
&  \int_{\Rd \times \Rd} c(x,y)d\gamma(x,y)
\end{align*}
\end{proof}

Once one has feasibility, a standard approach to establish existence of solutions is to establish compactness of any minimizing sequence of solutions. Using the growth conditions of the vector-fields, the following Lemma provides some estimates that will be used to derive the compactness.

\begin{lemma}
\label{lem:comp}
Given Assumption \ref{asmp:costco} (Tonelli running cost) and \ref{asmp:sublin} (vector-fields with sub-linear growth). Let $\mu_0 \in \mathcal{P}_p(\Rd)$. Suppose $\eta \in \mathcal{Y}^{\mu_0}_{\mu_1}(I \times \Rd \times U)$ such that \\ $\int_{I \times \Rd \times U}L(t,x,u)d\eta(t,x,u) \leq c$ for some constant $c>0$. Then the curve $\mu : I \rightarrow \mathcal{P}(\Rd)$ given in Proposition \ref{prop:cntcur} satisfies the estimate,
\begin{equation}
\sup_{t \in I} \int_{\Rd}|x|^pd\mu_t(x) \leq (A +\int_{\Rd}|x|^pd\mu_0(x))e^{BT}.
\end{equation}
for some constants $A,B>0$ that depend only on $c$ and the constants $p, \alpha, \beta$ and $M$ in Assumptions \ref{asmp:costco} and Assumption \ref{asmp:sublin}.
Therefore, the set 
\begin{equation}
\{\eta \in \mathcal{Y}^{\mu_0}_{\mu_1}(I \times \Rd \times U) ; \int_{I \times \Rd \times U}L(t,x,u)d\eta(t,x,u) \leq c \}
\label{eq:cmpmeas}
\end{equation}
is compact in $\mathcal{P}(I \times \Rd \times U)$.
\end{lemma}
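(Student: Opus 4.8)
The plan is to first derive the stated $p$-th moment bound on the curve $t\mapsto\mu_t$ and then to upgrade the resulting uniform integrability into compactness of the sublevel set via Prokhorov's theorem together with a closedness argument. The moment bound is obtained by testing the weak continuity equation \eqref{eq:wkwkeq} (which holds because $\eta\in\mathcal{Y}^{\mu_0}_{\mu_1}(I\times\Rd\times U)$) against the function $x\mapsto|x|^p$. Since this is not itself an admissible test function, I would first test against a smooth truncation $\phi_R$ agreeing with $|x|^p$ on $\{|x|\le R\}$, constant outside $\{|x|\le 2R\}$, with $|\partial_x\phi_R(x)|\le Cp(|x|^{p-1}+1)$ uniformly in $R$, and pass to the limit $R\to\infty$ at the end using the a priori control furnished by the bound itself (a standard truncation/continuation argument, justified by the sublinear growth in Assumption \ref{asmp:sublin} which guarantees integrability of $\partial_x\phi\cdot f$).

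Writing $m_p(t):=\int_{\Rd}|x|^p\,d\mu_t(x)=\int_{\Rd\times U}|x|^p\,d\eta_t(x,u)$, the identity produced by the truncated test function gives, for a.e.\ $t$,
\[
\frac{d}{dt}m_p(t)\le\int_{\Rd\times U}p|x|^{p-2}x\cdot f(x,u)\,d\eta_t(x,u).
\]
Using $f(x,u)=f_0(x)+\sum_i u_i f_i(x)$ and $|f_i(x)|\le M(|x|+1)$ from Assumption \ref{asmp:sublin}, the integrand is bounded by $pM(|x|^p+|x|^{p-1})(1+\sqrt n\,|u|)$. I would then split off the harmless term $\int|x|^{p-1}|u|\,d\eta_t$, to which Young's inequality with conjugate exponents $p/(p-1)$ and $p$ applies exactly, giving $|x|^{p-1}|u|\le\frac{p-1}{p}|x|^p+\frac1p|u|^p$ and hence the moment $m_p(t)$ plus the integral $\int|u|^p\,d\eta_t$. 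The latter is controlled globally in time by coercivity: under Assumption \ref{asmp:costco}(2)(a) one has $\int_{I\times\Rd\times U}|u|^p\,d\eta\le(c-\beta T)/\alpha=:C_u$.

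The crux, and the step I expect to be the main obstacle, is the remaining cross term $\int_{\Rd\times U}|x|^p|u|\,d\eta_t$, which couples state growth directly to control magnitude and cannot be decoupled by Young's inequality without raising the $|x|$-moment above $p$. Under the compact-control alternative in Assumption \ref{asmp:costco}(2)(b) this is immediate: $|u|\le R$ on $U$ gives $\int|x|^p|u|\,d\eta_t\le R\,m_p(t)$, so $\frac{d}{dt}m_p(t)\le a\,m_p(t)+b$ with constants $a,b$ depending only on $p,M,R$, and Gronwall's inequality yields $m_p(t)\le(A+m_p(0))e^{Bt}$. In the $L^p$-coercive case I would instead close the estimate through Gronwall's inequality in the form $\frac{d}{dt}m_p(t)\le a(t)\,(1+m_p(t))+b(t)$, where the time-dependent coefficients are built from the globally integrable quantity $t\mapsto\int|u|^p\,d\eta_t\in L^1(I)$ (using $p>1$ to absorb the control magnitude), so that $\int_0^T a(t)\,dt$ and $\int_0^T b(t)\,dt$ are bounded by constants depending only on $c,p,\alpha,\beta,M$; controlling this coupling is the delicate point, and it is precisely here that the $p>1$ growth of the coercive lower bound is essential.

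With the uniform bound $\sup_{t\in I}m_p(t)\le(A+m_p(0))e^{BT}$ in hand, I would conclude compactness as follows. Markov's inequality converts the uniform-in-$t$ moment bound into uniform tightness of $\{\mu_t:t\in I\}$ in $\Rd$; since $I$ is compact and the $u$-marginals are tight (either because $U$ is compact, or because $\int|u|^p\,d\eta\le C_u$ gives tightness in $u$ by Markov), every $\eta$ in the set \eqref{eq:cmpmeas} is tight on $I\times\Rd\times U$, hence the set is relatively compact in the narrow topology by Prokhorov's theorem. It remains to check closedness: along a narrowly convergent sequence $\eta^k\to\eta$, the constraints $\pi^t_\#\eta=\mathrm{leb}$ and the fixed marginals pass to the limit, the linear constraint \eqref{eq:wkwkeq} is preserved (here one upgrades narrow convergence against the unbounded integrand $\partial_x\phi\cdot f(x,u)$, which is only linear in $|u|$, using the uniform integrability of $|u|$ provided by the $L^p$ bound with $p>1$), and the cost $\eta\mapsto\int L\,d\eta$ is lower semicontinuous by the Tonelli conditions (continuity, convexity in $u$, coercivity) via the standard lower-semicontinuity theorem for integral functionals of Young measures. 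Tightness together with closedness then gives compactness of \eqref{eq:cmpmeas}.
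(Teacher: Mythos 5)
Your overall strategy coincides with the paper's: approximate $|x|^p$ by admissible test functions, estimate $\partial_x|x|^p\cdot f(x,u)$ via the sublinear growth, split off the control using Young's inequality together with the coercivity bound $\int|u|^p\,d\eta\le (c-\beta T)/\alpha$, close with Gronwall, and then deduce compactness from tightness (the paper gets tightness from the compact sublevel sets of $(t,x,u)\mapsto|x|^p+\alpha|u|^p-\beta$ rather than from Markov plus Prokhorov, but this is the same mechanism). Your attention to closedness of the constraint set under narrow limits is actually more complete than the paper's proof, which does not address that step.

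The gap is exactly where you flag it, and your proposed repair does not close it. In the $L^p$-coercive case with unbounded $U$, the cross term $\int_{\Rd\times U}|x|^p|u|\,d\eta_t$ cannot be dominated by $a(t)\bigl(1+m_p(t)\bigr)+b(t)$ with $a,b\in L^1(I)$ depending only on $c,p,\alpha,\beta,M$: H\"older in $(x,u)$ raises the $x$-moment above $p$, the substitution $|u|\le 1+|u|^p$ leaves the product $|x|^p|u|^p$ intact, and the quantity $\int|u|^p\,d\eta_t$ only controls an ensemble average, giving no pointwise-in-$x$ control of the conditional control magnitude. The obstruction is not merely technical: for $d=n=1$, $f_0=0$, $f_1(x)=x$ (admissible under the sublinear growth assumption) and $L=|u|^2$, sending a fraction $\epsilon$ of the mass outward with constant control of size $\sqrt{c/(\epsilon T)}$ and then back keeps the cost of order $c$ and the endpoints fixed, yet produces intermediate moments of order $\epsilon\, e^{\sqrt{cT/\epsilon}}\to\infty$, so no estimate of the claimed uniform form can hold in that generality. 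Your argument is complete in the compact-$U$ alternative, and it also goes through if the controlled fields $f_i$, $i\ge1$, are assumed bounded, since then the cross term really is $|u||x|^{p-1}$ and Young's inequality finishes it. For what it is worth, the paper's own computation bounds $p|x|^{p-1}|f(x,u)|$ by $M\bigl(|x|^{p-1}+|x|^p+|u||x|^{p-1}\bigr)$, i.e.\ it silently discards the $|u||x|^p$ contribution of the controlled fields, so it carries the same gap that you, to your credit, made explicit.
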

\begin{proof}
First we present a formal argument. 
Let $|x|^p$ be a test function. That is, we substitute of $g(t,x) = |x|^p$ in the definition \eqref{eq:wkwkeq}. We get the bound
\begin{align}
\int_{\Rd}|x|^pd\mu_t - \int_{\Rd}|x|^pd\mu_0  &=\int_{I \times \Rd \times U} [\partial_t |x|^{p} + \partial_x |x|^p \cdot f(x,u)]d\eta(t,x,u)  \nonumber \\
& = p\int_{I \times \Rd \times U} [ |x|^{p-1}\cdot f(x,u)]d\eta(t,x,u) \nonumber \\
& \leq \int_{I \times \Rd \times U} M[(|x|^{p-1}+|x|^p+|u||x|^{p-1})]d\eta(t,x,u) \nonumber \\
& \leq \int_{\Rd \times U} M\int_{I}[(|x|^{p-1}+|x|^p+\frac{|u|^p}{p}+\frac{(p-1)|x|^{p-1}}{p})]d\eta(t,x,u) \nonumber \\
\nonumber \\
& \leq A +B \int_{I \times \Rd }|x|^pd\mu_t(x)
\end{align}
using Young's inequality and using the fact that $|x|^p \geq |x|^{p-1}$ outside the unit ball, and
$A>0$ and $B>0$ only depend on $c$.
Then by Gronwall's equality we get,
\begin{align}
\int_{\Rd}|x|^pd\mu_t(x) \leq (A+ \int_{\Rd}|x|^pd\mu_0(x)) e^{Bt}
\label{eq:es1}
\end{align}
for all $t \in I$.
There are two issues with this argument. First, $|x|^p$ is not a compactly supported function. Second, it is not differentiable everywhere for all $p$.
To make this argument rigorous, we can approximate $|x|^p$ using a smooth function $g_{\varepsilon}$ such that $g_\varepsilon \uparrow |x|^p$ as in proof of Lemma 5.1 in \cite{fornasier2019mean}, and then recover the estimate \eqref{eq:es1} by taking the limit.

Next, we establish the compactness of the set
\eqref{eq:cmpmeas}
The estimate \eqref{eq:es1} implies that 
$\int_{I \times \Rd \times U}|x|^pd\eta(t,x,u) \leq c'$
where $c'>0$ depends only on $c$ and $\mu_0$. The definition of the set \eqref{eq:cmpmeas} and the estimate \eqref{eq:es1} implies that 
$\int_{I \times \Rd \times U}(|x|^p+\alpha |u|^p-\beta)d\eta(t,x,u) \leq c' +c$
where $\alpha>0$ and $\beta$ are the constants in the assumption of coercivity of the running cost function \ref{asmp:costco}.
The function $(t,x,u) \mapsto (|x|^p+\alpha |u|^p-\beta)$ has compact level sets. Therefore, the set \eqref{eq:cmpmeas} is tight by \cite[Corollary 3.61]{florescu2012young}
\end{proof}

The next theorem establishes a {\it purification result} that given a measure value control law for the relaxed problem, one can construct a deterministic control law that achieves a lower cost.

\begin{proposition}
\label{prop:pur}
Given Assumption \ref{asmp:costco} (Tonelli running cost) and \ref{asmp:costco} (vector-fields with sub-linear growth). Let $\eta \in \mathcal{Y}^{\mu_T}_{\mu_0}(I;\R^{d} \times U )$ be such that 
\[ \int_{I \times \Rd \times U }L(t,x,u)d\eta(t,x,u) < \infty \]
there exists a measurable control $u: [0,T] \times \mathbb{R}^d \rightarrow U$ such that 
\[\int_{I \times \Rd \times U} [\partial_t g + \nabla_x g \cdot f(x,u)] d\eta(t,x,u) = \int_{I \times \Rd } [\partial_t g + \nabla_x g \cdot f(x,u(t,x))]d\mu_t(x)    \]
for all $g \in C^{\infty}_c(I \times \Rd)$. Moreover, 
\begin{equation}
    \int_{I \times \Rd \times U }L(t,x,u)d\eta(t,x,u) \geq    \int_{I} \int_{\Rd \times U }L(t,x,u(t,x))d\mu_t(x)dt
\end{equation}
\end{proposition}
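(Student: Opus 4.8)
The plan is to construct the deterministic feedback as the conditional expectation (barycenter) of the Young measure in the control variable. First I would invoke Proposition \ref{prop:cntcur} to obtain the narrowly continuous curve $\mu_t = \pi^x_{\#}\eta_t$, and then apply the disintegration theorem to $\eta$ over the base variables $(t,x)$: since $\pi^t_{\#}\eta$ is the Lebesgue measure and the $x$-marginal of $\eta_t$ is $\mu_t$, there is a measurable family $\{\nu_{t,x}\}_{(t,x)} \subset \mathcal{P}(U)$ with $d\eta(t,x,u) = d\nu_{t,x}(u)\,d\mu_t(x)\,dt$. I then define the feedback by the barycenter
\[ u(t,x) := \int_U u'\, d\nu_{t,x}(u'). \]

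Next I would verify that $u$ is well defined, $U$-valued, and measurable. Finiteness of the barycenter for almost every $(t,x)$ follows from the coercivity in Assumption \ref{asmp:costco}: in case (a), the bound $L \geq \alpha|u|^p+\beta$ together with $\int L\,d\eta<\infty$ forces $\int |u|^p\,d\eta<\infty$, hence $\int_U|u'|\,d\nu_{t,x}(u')<\infty$ for a.e. $(t,x)$; in case (b) this is immediate since $U$ is compact. Because $U$ is closed and convex, a standard separation (Hahn--Banach) argument shows that the barycenter of a probability measure on $U$ with finite first moment again lies in $U$. Measurability of $(t,x)\mapsto u(t,x)$ then follows from measurability of the disintegration $(t,x)\mapsto\nu_{t,x}$.

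The weak-form identity is then a direct consequence of the affine dependence of $f$ on the control. For fixed $(t,x)$, since $f(x,u')=f_0(x)+\sum_{i=1}^n u'_i f_i(x)$ is affine in $u'$,
\[ \int_U f(x,u')\,d\nu_{t,x}(u') = f_0(x)+\sum_{i=1}^n \Big(\int_U u'_i\,d\nu_{t,x}(u')\Big) f_i(x) = f(x,u(t,x)). \]
Integrating $\partial_x g(t,x)\cdot f(x,u')$ against $\nu_{t,x}$, and using that $\mu_t$ is the $x$-marginal of $\eta_t$ to handle the $\partial_t g$ term, yields exactly the claimed identity after integration in $(t,x)$.

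Finally, the cost inequality is Jensen's inequality applied fiberwise. By convexity of $L(t,x,\cdot)$ in Assumption \ref{asmp:costco},
\[ L(t,x,u(t,x)) = L\Big(t,x,\int_U u'\,d\nu_{t,x}(u')\Big) \le \int_U L(t,x,u')\,d\nu_{t,x}(u'), \]
and integrating against $d\mu_t(x)\,dt$ recovers $\int_I\int_{\Rd} L(t,x,u(t,x))\,d\mu_t(x)\,dt \le \int_{I\times\Rd\times U} L\,d\eta$. I expect the main obstacle to be the careful construction of the measurable disintegration $\{\nu_{t,x}\}$ and the verification that the barycenter is finite and $U$-valued almost everywhere; once these measure-theoretic points are secured, both the weak-form identity and the inequality follow immediately from the affine structure of $f$ and the convexity of $L$.
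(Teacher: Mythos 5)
Your proposal is correct and follows essentially the same route as the paper: disintegrate $\eta$ over $(t,x)$, define $u(t,x)$ as the barycenter of the fiber measure, use the affine dependence of $f$ on the control for the weak-form identity, and apply Jensen's inequality fiberwise for the cost bound. The extra care you take with finiteness of the barycenter, its membership in $U$, and measurability of the disintegration is a welcome tightening of details the paper leaves implicit, but it does not change the argument.
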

\begin{proof}
Since $\pi^x_{\#} \eta_t = \mu_t$, there exists a disintegration of measure $ I \times \Rd \ni  (t,x) \mapsto \eta_{t,x} \in \mathcal{P}(U)$ such that

\begin{align}
\int g(t,x,v) d \eta_t(x,u) =  \int g(t,x,u) d \eta_{t,x}(v) d\mu_t(x)
 \end{align}
for all compactly supported functions $g \in C^{\infty}(I \times \Rd \times U)$.
This implies 
\[\int_{I \times \Rd \times U} [\partial_t g + \nabla_x g \cdot f(x,u)] d\eta(t,x,u) = \int_{I \times \Rd \times U} [\partial_t g + \nabla_x g \cdot f(x,u)] d\eta_{t,x}(u)d\mu_t(x)dt \]
Since $f(x,u)$ is affine with respect to $u$, 
we get

\begin{align}\int_{I \times \Rd \times U} [\partial_t g + \nabla_x g \cdot f(x,u)] d\eta(t,x,u) \nonumber \\= \int_{I \times \Rd} [\partial_t g + \nabla_x g \cdot f(x, \int_{U} u d\eta_{t,x}(u)]d\mu_t(x)dt  \nonumber \\
\end{align}

Defining $u(t,x) = \int_{U}u d\eta_{t,x}(u)$ for $\mu_t$ almost every $x \in \Rd$ and Lebesgue almost every $ t \in I$, we have the first part of our result.
Next, by assumption $u \mapsto L(t,x,u)$ is convex for every $(t,x) \in I \times \Rd$. Then by Jensen's inequality we have that,

\begin{align} \int_{I \times \Rd \times U }L(t,x,u)d\eta(t,x,u) &=    \int_{I \times \Rd  }\int_U L(t,x,u)d\eta_{t,x}(u)d\mu_t(x) dt
\nonumber \\
&\geq \int_{I \times \Rd  } L(t,x,\int_U ud\eta_{t,x}(u))d\mu_t(x) dt \nonumber \\
& = \int_{I \times \Rd } L(t,x,u(t,x))d\mu_t(x) dt
\end{align}
This concludes the proof.
\end{proof}

Given the above result we can conclude the following corollary of constructing a feedback control that transports a given measure to a final one, based on controllability of the system \eqref{eq:ctrsyspre}.

\begin{corollary}
\textbf{(Point-to-point controllability to Measure-to-measure controllability)}
Let $\gamma \in \Pi$. Suppose $S:\mathbb{R}^d \times \Rd \rightarrow \Omega $ is a measurable map defined $\gamma$ almost everywhere on $\Rd \times \Rd$ such that 
\[S^{\omega}_0(x,y) = x, ~~ S^{\omega}_T(x,y) = y\]
for $\gamma$ almost every $(x,y)\in \Rd \times \Rd$ and 
\[\int_0^T\int_{\Rd \times \Rd} L(t,S^{\omega}_0(x,y),S^{u}_0(x,y))dtd\gamma(x,y)<\infty.\] Then there exists a pair $(\mu,u)$ that solves the continuity equation \eqref{eq:weTaq}, such that $\mu_0 = \pi^1_{\#} \gamma$ and $\mu_T = \pi^2_{\#} \gamma$, and $u(t,\cdot) \in L^p(\mu_t)$ for almost every $t \in [0,T]$.
\end{corollary}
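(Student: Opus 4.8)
The plan is to obtain the pair $(\mu,u)$ by chaining the two preceding results: first lift the trajectory selection $S$ to a controlled transport Young measure via Proposition \ref{prop:marg2yng}, and then purify that Young measure into a genuine feedback law via Proposition \ref{prop:pur}. Concretely, I would apply Proposition \ref{prop:marg2yng} to the map $S$ (setting $\mu_0 := \pi^1_{\#}\gamma$ and $\mu_T := \pi^2_{\#}\gamma$) to produce $\eta = F_{\#}\gamma \in \mathcal{Y}^{\mu_T}_{\mu_0}(I \times \Rd \times U)$. The $x$-marginal curve $\mu_t = \pi^x_{\#}\eta_t$ is narrowly continuous by Proposition \ref{prop:cntcur}, and the boundary identities $\mu_0 = \pi^1_{\#}\gamma$, $\mu_T = \pi^2_{\#}\gamma$ fall out of the endpoint constraints $S^\omega_0(x,y)=x$, $S^\omega_T(x,y)=y$ together with the pushforward formula defining $\eta$.

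Second, I would check that $\eta$ has finite cost. By the change of variables defining $\eta$ as $F_{\#}\gamma$,
\[
\int_{I \times \Rd \times U} L(t,x,u)\,d\eta(t,x,u) = \int_{\Rd \times \Rd}\int_I L\big(t,S^\omega_t(x,y),S^u_t(x,y)\big)\,dt\,d\gamma(x,y),
\]
which is finite by hypothesis (here I read the integrand in the assumption as evaluated at the running time $t$, i.e. $S^\omega_t, S^u_t$, rather than at $t=0$). With finiteness in hand, Proposition \ref{prop:pur} applies and yields the deterministic feedback $u(t,x) := \int_U u\,d\eta_{t,x}(u)$, where $\eta_{t,x}$ is the disintegration of $\eta$ over $(t,x)$. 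Because $f(x,\cdot)$ is affine in $u$, the purification identity transfers the dynamics from $\eta$ to $u$, so this $u$ satisfies the weak continuity equation \eqref{eq:weTaq} with the same curve $\mu_t$, and $(\mu,u)$ is feasible.

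Third, I would verify the regularity claim $u(t,\cdot)\in L^p(\mu_t)$ for a.e. $t$, which is where the coercivity half of Assumption \ref{asmp:costco} enters. If $U$ is compact the claim is immediate, since $\mu_t$ is a probability measure and $u$ is bounded. Otherwise $L(t,x,u)\ge \alpha|u|^p+\beta$, and Jensen's inequality gives $|u(t,x)|^p \le \int_U |u|^p\,d\eta_{t,x}(u) \le \alpha^{-1}\big(\int_U L(t,x,u)\,d\eta_{t,x}(u)-\beta\big)$. Integrating against $\mu_t$ and then in $t$, and using the finite-cost bound, shows $\int_I\int_{\Rd}|u(t,x)|^p\,d\mu_t(x)\,dt < \infty$, so $u(t,\cdot)\in L^p(\mu_t)$ for Lebesgue-a.e.\ $t$ by Fubini.

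The step I expect to require the most care is the bookkeeping of disintegrations rather than any single estimate: matching the feedback formulation \eqref{eq:weTaq} against the Young-measure formulation \eqref{eq:wkwkeq} demands that the disintegration $\eta_{t,x}$ used to define $u$ be exactly the one through which the cost is computed, and that measurability of $(t,x)\mapsto u(t,x)$ be preserved under the barycenter map $\int_U u\,d\eta_{t,x}(u)$. These facts are precisely the content of Proposition \ref{prop:pur}, so once that proposition is granted the corollary reduces to an assembly of the pieces, with the affinity of $f$ in $u$ doing the essential work.
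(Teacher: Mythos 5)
Your proposal is correct and follows essentially the same route as the paper, whose proof is a one-line reduction to the argument of Proposition \ref{prop:kan2bb} (i.e.\ lifting $S$ to a Young measure via Proposition \ref{prop:marg2yng} and computing the cost by change of variables) followed by an application of the purification result, Proposition \ref{prop:pur}. Your additional details --- reading the finite-cost hypothesis with the running time $t$ in place of the apparent typo $S^{\omega}_0, S^{u}_0$, and verifying $u(t,\cdot)\in L^p(\mu_t)$ from coercivity and Jensen's inequality --- are consistent with, and slightly more complete than, what the paper records.
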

\begin{proof}
The proof follows as in proof of \ref{prop:kan2bb} and then applying Proposition \ref{prop:pur}.
\end{proof}

Next, we are able to use the compactness to establish existence of minimizers given that one assumes the feasible set is non-empty.
\begin{theorem}
\label{thm:existbb}
\textbf{(Feasibility implies existence of minimizer)}
Given Assumption \ref{asmp:costco} (Tonelli running cost) and \ref{asmp:costco} (vector-fields with sub-linear growth). Let $\mu_0 \in \mathcal{P}_p(\Rd)$ and $\mu_T \in \mathcal{P}_p(\Rd)$. Suppose the relaxed Benamou-Brenier problem \eqref{eq:main_optimrbb} is feasible. Then there exists a pair $(\mu,u)$ that solves the Benamou-Brenier formulation of optimal transport \eqref{eq:main_optimbb}. 
\end{theorem}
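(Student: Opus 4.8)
The plan is to solve the relaxed problem \eqref{eq:main_optimrbb} by the direct method and then \emph{purify} the relaxed minimizer via Proposition \ref{prop:pur} to obtain a genuine feedback solution of \eqref{eq:main_optimbb}. The argument is tied together by the relaxation inequality: as noted immediately after \eqref{eq:main_optimrbb}, every feasible pair $(u,\mu)$ of the Benamou-Brenier problem induces a Young measure $K_t(A)=\int_A\delta_{u(t,x)}\,d\mu_t(x)$ that is feasible for the relaxed problem with the same cost, so the relaxed infimum is at most $C_{BB}(\mu_0,\mu_T)$. Feasibility of \eqref{eq:main_optimrbb}, understood as supplying at least one feasible measure of finite cost (such as the one built in Proposition \ref{prop:kan2bb}), makes this infimum finite; when it is $+\infty$ there is nothing to prove.

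First I would establish the existence of a minimizer $\eta^\ast$ of the relaxed problem. Let $(\eta^n)_n\subset\mathcal{Y}_{\mu_0}^{\mu_T}(I\times\Rd\times U)$ be a minimizing sequence, so that for $n$ large its cost is bounded by a constant $c$ and each $\eta^n$ lies in the set \eqref{eq:cmpmeas} of Lemma \ref{lem:comp}. That lemma supplies the uniform moment estimate on the associated curves together with tightness of $(\eta^n)_n$, via the compact sublevel sets of $(t,x,u)\mapsto|x|^p+\alpha|u|^p-\beta$; hence along a subsequence $\eta^n$ converges narrowly to some $\eta^\ast$ in $\mathcal{P}(I\times\Rd\times U)$. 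The time-marginal constraint $\pi^t_\#\eta^\ast={\rm leb}$ passes to the limit at once, since ${\rm leb}$ is fixed and test functions on $I$ are bounded. Because $L$ is continuous and bounded below, the functional $\eta\mapsto\int L\,d\eta$ is narrowly lower semicontinuous, so $\int L\,d\eta^\ast\le\liminf_n\int L\,d\eta^n$, which equals the relaxed infimum; once $\eta^\ast$ is shown to be feasible, this forces equality and $\eta^\ast$ is a relaxed minimizer.

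The delicate point---and the step I expect to be the main obstacle---is checking that the constraint \eqref{eq:wkwkeq} survives the narrow limit. For a fixed $\phi\in C^1_c(I\times\Rd)$ the integrand $\partial_t\phi+\partial_x\phi\cdot f(x,u)$ is continuous but, since $f(x,u)=f_0(x)+\sum_i u_i f_i(x)$ grows linearly in $u$ on the compact $x$-support of $\partial_x\phi$, it is \emph{not} bounded, so plain narrow convergence does not by itself justify passing to the limit. The remedy is the uniform control $\sup_n\int|u|^p\,d\eta^n<\infty$ with $p>1$ coming from coercivity, which yields uniform integrability of the inputs, $\sup_n\int_{\{|u|>R\}}|u|\,d\eta^n\to0$ as $R\to\infty$. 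Splitting the integrand by a smooth cutoff in $u$ at level $R$, the bounded continuous part passes to the limit by narrow convergence while the remainder is uniformly negligible; letting $R\to\infty$ transfers \eqref{eq:wkwkeq} to $\eta^\ast$. In the compact-$U$ alternative of Assumption \ref{asmp:costco} the integrand is bounded outright and this step is immediate. In either case $\eta^\ast\in\mathcal{Y}_{\mu_0}^{\mu_T}(I\times\Rd\times U)$, which completes the existence of a relaxed minimizer.

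Finally I would apply the purification result. Proposition \ref{prop:pur} provides a measurable feedback $u^\ast:I\times\Rd\to U$, together with the curve $\mu^\ast_t=\pi^x_\#\eta^\ast_t$ of Proposition \ref{prop:cntcur}, such that the relaxed and feedback weak forms agree for all test functions and the feedback cost does not exceed $\int L\,d\eta^\ast$. Since $\eta^\ast$ satisfies \eqref{eq:wkwkeq}, this identity shows that $(\mu^\ast,u^\ast)$ solves the weak continuity equation \eqref{eq:weTaq} and is therefore feasible for \eqref{eq:main_optimbb}, while the cost inequality gives
\[
C_{BB}(\mu_0,\mu_T)\;\le\;\int_I\!\int_{\Rd}L(t,x,u^\ast(t,x))\,d\mu^\ast_t(x)\,dt\;\le\;\int L\,d\eta^\ast\;=\;\inf\eqref{eq:main_optimrbb}\;\le\;C_{BB}(\mu_0,\mu_T).
\]
Hence all the quantities coincide and $(\mu^\ast,u^\ast)$ attains $C_{BB}(\mu_0,\mu_T)$, i.e. it is a minimizer of \eqref{eq:main_optimbb}. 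A routine technical point is that Proposition \ref{prop:pur} is phrased for $g\in C^\infty_c$ whereas \eqref{eq:weTaq} uses $\phi\in C^1_c$, which is bridged by a standard density argument.
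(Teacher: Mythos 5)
Your proposal is correct and follows essentially the same route as the paper: existence of a relaxed minimizer by the direct method using the compactness from Lemma \ref{lem:comp} and narrow lower semicontinuity of the cost, followed by purification via Proposition \ref{prop:pur}. The only difference is that you make explicit two points the paper leaves implicit --- the closedness of the constraint \eqref{eq:wkwkeq} under narrow limits (handled by uniform integrability of $|u|$ from the $p>1$ coercivity) and the final chain of inequalities showing the purified pair actually attains $C_{BB}(\mu_0,\mu_T)$ --- both of which are welcome additions rather than deviations.
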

\begin{proof}
    Since the optimization problem \eqref{eq:main_optimrbb} is feasible and the set of decision variables $\mathcal{Y}^{\mu_T}_{\mu_0}(I :\Rd \times U)$ is compact by Lemma \ref{lem:comp}. The running cost $L$ is continuous and bounded from below. Hence, the map $\eta \mapsto \int_{I \times \Rd \times U}L(t,x,u)\eta(t,x,u)$ is lower semi-continuous by the Portmanteau theorem \cite[Lemma 5.1.7]{ambrosio2008gradient}. Therefore, \eqref{eq:main_optimrbb} has a minimizer. According to Proposition \ref{prop:pur} there exists a pair $(\mu, u)$ that achieves a lower cost that
\begin{equation}
    \int_{I \times \Rd \times U }L(t,x,u)d\eta(t,x,u) \geq    \int_{I} \int_{\Rd  }L(t,x,u(t,x))d\mu_t(x)dt
\end{equation}

\end{proof}

The next result establishes an important equivalence between the Kantorovich problem and the Benamou-Brenier problem.
\begin{theorem}
\label{thm:equivkbb}
\textbf{(Equivalence of Kantorovich and Benamou-Brenier formulation)}
Given Assumption \ref{asmp:costco} (Tonelli running cost) and \ref{asmp:costco} (vector-fields with sub-linear growth). Let $\mu_0, \mu_T \in \mathcal{P}_1(\Rd)$. The Kantorovich problem \eqref{eq:kant} and the Benamou-Brenier problem \eqref{eq:main_optimbb} are equivalent,
\[C_{\rm kan}(\mu_0,\mu_T) = C_{BB}(\mu_0,\mu_T)\]
\end{theorem}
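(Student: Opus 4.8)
The plan is to establish the equality by proving the two inequalities $C_{BB}(\mu_0,\mu_T) \le C_{\rm kan}(\mu_0,\mu_T)$ and $C_{\rm kan}(\mu_0,\mu_T) \le C_{BB}(\mu_0,\mu_T)$ separately; if one side is $+\infty$ the matching bound follows from the other inequality, so I would work throughout with competitors of finite cost.

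For $C_{BB} \le C_{\rm kan}$, I would take an arbitrary $\gamma \in \Pi$ with $\int c\, d\gamma < \infty$ and chain together two results already proved. Proposition \ref{prop:kan2bb} converts $\gamma$ into a controlled transport measure $\eta \in \mathcal{Y}^{\mu_T}_{\mu_0}(I \times \Rd \times U)$ with $\int L\, d\eta = \int c\, d\gamma$, and the purification Proposition \ref{prop:pur} then extracts a genuine feedback control $u$ and (via Proposition \ref{prop:cntcur}) a narrowly continuous curve $\mu$ solving \eqref{eq:weTaq}, with Benamou--Brenier cost no larger than $\int L\, d\eta$. Thus $(\mu,u)$ is admissible for \eqref{eq:main_optimbb} with cost $\le \int c\, d\gamma$, and taking the infimum over $\gamma$ gives $C_{BB} \le C_{\rm kan}$.

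The reverse inequality is the substantive direction, and here I would invoke the superposition principle. Given any admissible $(\mu,u)$ for \eqref{eq:main_optimbb} of finite cost, set $v(t,x) := f(x,u(t,x))$. Coercivity (Assumption \ref{asmp:costco}) bounds $\int_I\int_{\Rd} |u|^p\, d\mu_t\, dt$ by the cost (and is immediate if $U$ is compact), and together with the sublinear bound $|f(x,u)| \le M'(1+|x|)(1+|u|)$ coming from Assumption \ref{asmp:sublin} and Hölder's inequality this yields the integrability
\[
\int_I \int_{\Rd} \frac{|v(t,x)|}{1+|x|}\, d\mu_t(x)\, dt < \infty
\]
required to apply the superposition principle \cite[Theorem 8.2.1]{ambrosio2008gradient}. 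This produces a measure $\Theta$ on $\Gamma = C([0,T];\Rd)$ concentrated on absolutely continuous solutions of $\dot\omega(t) = v(t,\omega(t))$ with $\mu_t = (e_t)_\# \Theta$, where $e_t(\omega) := \omega(t)$. I would then set $\gamma := (e_0,e_T)_\# \Theta \in \Pi$, whose marginals are $\mu_0$ and $\mu_T$.

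To close the argument I would observe that for $\Theta$-a.e.\ $\omega$ the induced open-loop control $\tilde u(t) := u(t,\omega(t))$ makes $(\omega,\tilde u)$ satisfy \eqref{eq:ctrsyspre}, and since $\int_\Gamma\int_0^T |u(t,\omega(t))|^p\, dt\, d\Theta = \int_I\int_{\Rd}|u|^p\, d\mu_t\, dt < \infty$, Fubini guarantees $\tilde u \in \mathcal{U}$ for $\Theta$-a.e.\ path, so $(\omega,\tilde u) \in \Omega$ and the definition \eqref{eq:OCP} of $c$ gives $c(\omega(0),\omega(T)) \le \int_0^T L(t,\omega(t),\tilde u(t))\, dt$. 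Integrating against $\Theta$ and using $\mu_t = (e_t)_\# \Theta$ to identify the right-hand side with the Benamou--Brenier cost yields
\begin{align}
C_{\rm kan} &\le \int_{\Rd \times \Rd} c(x,y)\, d\gamma(x,y) \nonumber \\
&\le \int_I \int_{\Rd} L(t,x,u(t,x))\, d\mu_t(x)\, dt, \nonumber
\end{align}
and infimizing over $(\mu,u)$ gives $C_{\rm kan} \le C_{BB}$. I expect the main obstacle to be exactly this last step: verifying the integrability hypothesis of the superposition principle and, more delicately, confirming that $\Theta$-almost every integral curve paired with its induced control is a genuinely admissible trajectory, so that the pointwise optimal-control inequality for $c$ can legitimately be invoked and linked back to the fluid-dynamical cost.
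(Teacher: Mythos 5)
Your proposal is correct and follows essentially the same route as the paper: the inequality $C_{BB}\le C_{\rm kan}$ via Proposition \ref{prop:kan2bb} together with the purification result, and the reverse inequality via the superposition principle applied to $v(t,x)=f(x,u(t,x))$, followed by pushing forward to $(e_0,e_T)_{\#}\Theta$ and using the pointwise bound $c(\omega(0),\omega(T))\le\int_0^T L\,dt$. Your version is in fact slightly more careful than the paper's, which starts from an optimal pair rather than an arbitrary finite-cost competitor and leaves the integrability and admissibility checks implicit.
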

\begin{proof}
Let $(\mu,u)$ be an optimal pair that minimizes \eqref{eq:main_optimbb}. From the coercivity assumption on the running cost and we can infer that 

\[ \int_{I} \int_{\Rd} \frac{|f(x,u(t,x))|}{1+|x|}d\mu_t(x)dt \leq \int_{\Rd} M(|u(t,x)|)d\mu_t(x)dt < \infty \]
for some $M>0$, due to the assumptions on the running cost $L$.
Since, $\mu$ solves the continuity equation, from the superposition principle \cite[Theorem 3.4]{ambrosio2014continuity}, we can infer that there exists a probability measure $\eta \in \mathcal{P}(\Rd \times \Gamma)$ such that $(e_t)_{\#} \eta = \mu_t$ for all $t \in [0,T]$, 
and 
\[\dot{\omega}= f(\omega(t),u(t,\omega(t))).\]Consider the map $ (x,\omega) \mapsto E(x,\omega):= (e_0(\omega),e_T(\omega))$ and the measure $E_{\#} \eta \in \mathcal{P}(\Rd \times \Rd)$. Computing
\begin{align}
 \int_{\Rd \times \Rd}c(x,y)dE_{\#} \eta(x,y)  & =\int_{\Rd \times \Gamma}c(\omega(0),\omega(T))d \eta(x,\omega) \nonumber \\ 
& \leq \int_{I}\int_{\Rd \times \Gamma} L(t,\omega(t),u(t,\omega(t)))d \eta(x,\omega) dt\nonumber \\
& = \int_I \int_{\Rd } L(t,x,u(t,x)) d\mu_t(x)dt \nonumber \\
\end{align}
This in combination with Proposition \eqref{prop:kan2bb} imply that both the optimization problems achieve the same minimum.
\end{proof}

In the following proposition we note that the optimal solutions are concentrated on optimal curves as in solutions of Benamou-Brenier for Euclidean costs. Though the proof is similar to that of the previous result, we state it here due to its importance. 
\begin{proposition}
\textbf{(Concentration of transport on optimal trajectories)}
Given Assumption \ref{asmp:costco} (Tonelli running cost) and \ref{asmp:costco} (vector-fields with sub-linear growth). Let $\mu_0, \mu_T \in \mathcal{P}_1(\Rd)$. Suppose there exists an optimal pair $(\mu,u)$ solving the Benamou-Brenier problem \eqref{eq:main_optimbb}. Then there exists a probability measure $\eta \in \mathcal{P}(\Rd \times \Gamma)$ such that $(e_t)_{\#} \eta = \mu_t$ for all $t \in [0,T]$, 
and 
\[\dot{\omega}= f(\omega(t),u(t,\omega(t)))\]
Moreover, the probability measure $\eta$ satisfies, ${\rm supp} ~ \eta \subseteq \Rd \times \pi^\omega(G):= \{\omega; {\rm s.t.}~ (\omega,u) \in G \}$.
\end{proposition}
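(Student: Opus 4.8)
The plan is to reuse the superposition machinery from the proof of Theorem \ref{thm:equivkbb} and then sharpen it to force concentration on optimal curves. First I would apply the superposition principle \cite[Theorem 3.4]{ambrosio2014continuity} to the optimal pair $(\mu,u)$: the coercivity of $L$ again supplies the integrability bound $\int_I \int_{\Rd} \frac{|f(x,u(t,x))|}{1+|x|}\, d\mu_t(x)\, dt < \infty$ exactly as in that proof, so there is a measure $\eta \in \mathcal{P}(\Rd \times \Gamma)$ with $(e_t)_{\#}\eta = \mu_t$ for every $t$ and concentrated on curves solving $\dot\omega = f(\omega(t),u(t,\omega(t)))$. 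This already gives the first two assertions, so the real content is the \emph{moreover} clause.

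Second, I would rewrite the optimal cost as a path integral against $\eta$. Using $(e_t)_{\#}\eta = \mu_t$,
\begin{align*}
C_{BB}(\mu_0,\mu_T) &= \int_I \int_{\Rd} L(t,x,u(t,x))\, d\mu_t(x)\, dt \\
&= \int_{\Rd \times \Gamma} \Big( \int_0^T L(t,\omega(t),u(t,\omega(t)))\, dt \Big)\, d\eta(x,\omega).
\end{align*}
For $\eta$-a.e.\ $(x,\omega)$ the curve $\omega$ together with the control $t \mapsto u(t,\omega(t))$ is an admissible trajectory joining $\omega(0)$ to $\omega(T)$, so by definition of $c$ the inner integral is bounded below by $c(\omega(0),\omega(T))$. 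Pushing forward through $E(x,\omega) = (e_0(\omega),e_T(\omega))$, which yields a coupling $E_{\#}\eta \in \Pi$ with marginals $\mu_0,\mu_T$, I obtain
\begin{align*}
C_{BB}(\mu_0,\mu_T) &\geq \int_{\Rd \times \Gamma} c(\omega(0),\omega(T))\, d\eta(x,\omega) \\
&= \int_{\Rd \times \Rd} c(x,y)\, d(E_{\#}\eta)(x,y) \geq C_{\rm kan}(\mu_0,\mu_T).
\end{align*}
By Theorem \ref{thm:equivkbb} the two ends coincide, so every inequality is an equality; in particular the nonnegative integrand $\int_0^T L(t,\omega(t),u(t,\omega(t)))\, dt - c(\omega(0),\omega(T))$ integrates to zero and therefore vanishes for $\eta$-a.e.\ $(x,\omega)$. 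Hence $\eta$-a.e.\ curve is optimal, i.e.\ $(\omega,u(\cdot,\omega(\cdot))) \in G$, which is the desired concentration up to null sets.

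Finally I would upgrade this almost-everywhere concentration to the stated support inclusion, and this is the step I expect to be the main obstacle. Since $\eta$ assigns full mass to $\Rd \times \pi^\omega(G)$, the support lies in $\Rd \times \overline{\pi^\omega(G)}$, so the issue reduces to showing $\pi^\omega(G)$ is closed. Here I would combine the earlier Lemma asserting that $G$ is closed in $\Omega$ with the coercivity-driven compactness used in Lemma \ref{lem:comp} and in the proof of lower semicontinuity of $c$: along a sequence of optimal trajectories with uniformly bounded cost, the trajectories converge in $\Gamma$ and the controls converge weakly in $\mathcal{U}$, and closedness of $G$ pins the limit inside $G$, so its trajectory belongs to $\pi^\omega(G)$. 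The delicate point is securing the uniform cost bound for limiting sequences of curves drawn from the support; I would obtain it by working on the full-measure set where $\int_0^T L\, dt = c(\omega(0),\omega(T))$ and using the moment estimate of Lemma \ref{lem:comp} to control the endpoints, after which the support inclusion follows.
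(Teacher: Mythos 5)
Your argument for the existence of $\eta$ and for the concentration on optimal curves is correct and is essentially the paper's own proof: superposition principle applied to the optimal pair $(\mu,u)$, pushforward through $E(x,\omega)=(e_0(\omega),e_T(\omega))$ to produce a coupling in $\Pi$, the pointwise lower bound $\int_0^T L\,dt \geq c(\omega(0),\omega(T))$, and the collapse of all inequalities to equalities via Theorem \ref{thm:equivkbb}, forcing the nonnegative defect to vanish $\eta$-almost everywhere. Where you diverge is in the final step: the paper stops at ``$\omega$ is optimal for $\eta$-almost every $(x,\omega)$'' and does not address the passage from almost-everywhere concentration to the literal topological support inclusion ${\rm supp}\,\eta \subseteq \Rd \times \pi^\omega(G)$; you correctly identify that this requires (essentially) closedness of $\pi^\omega(G)$, since full measure only yields containment of the support in the closure. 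Your proposed route through the closedness of $G$ in $\Omega$ plus the coercivity-based compactness is the natural one, but the weak point you flag is real: along a sequence $\omega_n \to \omega$ with $(\omega_n,u_n)\in G$, the costs $J(\omega_n,u_n)=c(\omega_n(0),\omega_n(T))$ need not be uniformly bounded, because $c$ is only lower semicontinuous, so one cannot in general extract a weakly convergent subsequence of controls; restricting to the full-measure set where the cost identity holds does not by itself repair this, since support points need not lie in that set. So your proof establishes everything the paper's proof establishes (by the same method), and your additional step is an honest but incomplete attempt to close a gap that the paper's own proof leaves open; under extra hypotheses ensuring continuity of $c$ (e.g.\ the driftless controllable case), your closedness argument does go through.
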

\begin{proof}
The existence of $\eta$ follows using an application of the superposition principle as in Theorem \ref{thm:equivkbb}. For the concentration of the curves, we can once again define the map $ (x,\omega) \mapsto E(x,\omega):= (e_0(\omega),e_T(\omega))$ and the measure $E_{\#} \eta \in \mathcal{P}(\Rd \times \Rd)$. We  know from Theorem \ref{thm:equivkbb} that
\begin{align}
 \int_{\Rd \times \Rd}c(x,y)dE_{\#} \eta(x,y)  & =\int_{\Rd \times \Gamma}c(\omega(0),\omega(T))d \eta(x,\omega) \nonumber \\ 
& = \int_{I}\int_{\Rd \times \Gamma} L(t,\omega(t),u(t,\omega(t)))d \eta(x,\omega) dt \nonumber
\end{align}
If $\omega$ is not optimal for $\eta$ almost every $x,\omega$, then the last equality would lead to a contradiction.
\end{proof}

The curve $(e_t)_{\#} \eta$ is known as the displacement interpolation between $\mu_0$ and $\mu_T$. When there exists a solution to the Monge problem the measure $E_{\#} \eta = (T, Id)_{\#} \mu_0 $, where $T_{\#}\mu_0 = \mu_T $ is the deterministic map that transports $\mu_0$ to $\mu_T$ and solves the Monge problem \ref{eq:mong}.

Next, we consider some special cases. 

Let $\mathcal{V} = \{ g_1,...,g_m\}$. Let $[f,g]$ denote the Lie bracket operation between two vector fields $f: \mathbb{R}^d \rightarrow \mathbb{R}^d$ and $g: \mathbb{R}^d \rightarrow \mathbb{R}^d$, given by
where $\partial_{i}$ denotes partial derivative with respect to coordinate $i$.
 \begin{equation}
 [f,g]_i = \sum_{j=1}^d f^j \partial_{x_j} g^i - g^j \partial_{x_j} f^i.
 \end{equation}
 
 We define $\mathcal{V}^0 =\mathcal{V}$. For each $i \in \mathbb{Z}_+$, we define in an iterative manner the set of vector fields $\mathcal{V}^i = \lbrace [g, h]; ~g \in \mathcal{V}, ~h \in \mathcal{V}^{j-1}, ~j=1,...,i \rbrace$.  We will assume that the collection of vector fields $\mathcal{V}$ satisfies following condition the {\it Chow-Rashevsky} condition \cite{agrachev2019comprehensive}. We will say that $\cup_{i =0}^r \mathcal{V}^i$ has rank $d$ if ${\rm span} \{ g(x) \in \cup_{i =0}^r \mathcal{V}^i \} = \Rd$ for all $x \in \Rd$.
 
 \begin{assumption}
 \label{asmp:ctbdrif}
 \textbf{(Controllable driftless system/Hormander condition})
 Suppose $f_0 \equiv 0$ and $f_i \in C^{\infty}(\Rd ;\Rd)$ for each $i =1,...,n$. The Lie algebra of step $r$ generated by the vector fields $\mathcal{V}$, given by $\cup_{i =0}^r \mathcal{V}^i$, has rank $d$, for sufficiently large $r$.
 \end{assumption}

Consider the {\it sub-Riemannian metric} $d : \Rd \times \Rd \rightarrow \R$
 \begin{align}
d(x,y) := \inf_{\omega,u} \int_0^T \sqrt{\sum_{i=1}^m u^2_i(t)}dt \nonumber \\
 \text{subject to}~~\eqref{eq:ctrsyspre}.
 \end{align}

We note the following equivalence between the sub-Riemannian energy cost and the optimal control problem with kinetic energy running cost.

 \begin{lemma}(\cite{cannarsa2008semiconcavity})
 Suppose Assumption \ref{asmp:ctbdrif} holds. Then $d^2(x,y) = c(x,y)$ for all $(x,y) \in \Rd \times \Rd$, where the running cost is given by $L(t,x,u) = \sum_{i=1}^m|u_i|^2$ for all $(t,x,u) \in I \times \Rd \times U$.
 \end{lemma}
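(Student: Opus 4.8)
The plan is to establish the identity by proving two inequalities, exploiting the fact that the driftless structure ($f_0 \equiv 0$) makes the control system positively homogeneous in $u$: rescaling the time variable rescales the admissible control proportionally, so the length functional $\int_0^T \sqrt{\sum_i u_i^2}\,dt$ is invariant under reparametrization while the energy functional $\int_0^T \sum_i u_i^2\,dt$ is not. The whole argument is the classical length-versus-energy comparison from (sub-)Riemannian geometry; the Chow--Rashevsky/H\"ormander hypothesis in Assumption \ref{asmp:ctbdrif} enters only to guarantee that admissible connecting trajectories exist, so that both infima are finite and attained along genuine curves.

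For one inequality I would fix any admissible pair $(\omega,u)$ with $\omega(0)=x$, $\omega(T)=y$ and apply the Cauchy--Schwarz inequality in $L^2(0,T)$ to the scalar speed $t \mapsto \sqrt{\sum_i u_i^2(t)}$, obtaining
\[
\Big(\int_0^T \sqrt{\textstyle\sum_i u_i^2}\,dt\Big)^2 \le T \int_0^T \sum_i u_i^2\,dt .
\]
Since the left-hand side is bounded below by $d(x,y)^2$ (as $d(x,y)$ is by definition the infimum of the length over all admissible controls), taking the infimum of the right-hand side over $u$ yields $d(x,y)^2 \le T\, c(x,y)$.

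For the reverse inequality I would use constant-speed reparametrization. Given an admissible $(\omega,u)$ of length $\ell>0$, set $\tilde\omega(s) = \omega(\phi(s))$ for an increasing $\phi:[0,T]\to[0,T]$ with $\phi(0)=0,\ \phi(T)=T$; by the chain rule $\tilde u_i(s) = u_i(\phi(s))\,\phi'(s)$ is again an admissible control satisfying \eqref{eq:ctrsyspre} (here the driftless, $u$-homogeneous form of the dynamics is essential), the length is unchanged, and choosing $\phi$ via the arc-length function $L(t)=\int_0^t\sqrt{\sum_i u_i^2}\,d\tau$ so that the speed $|\tilde u(s)|$ is the constant $\ell/T$ gives energy exactly $\ell^2/T$. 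Passing to the infimum over admissible curves yields $c(x,y)\le d(x,y)^2/T$, which together with the first inequality gives $d(x,y)^2 = T\,c(x,y)$; this is the claimed identity once the horizon is normalized to $T=1$ (as is standard for the sub-Riemannian energy cost).

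I expect the main obstacle to be the rigorous implementation of the constant-speed reparametrization rather than the Cauchy--Schwarz step. One must handle the case in which no exact length-minimizer exists by working with near-minimizing sequences, verify that the arc-length reparametrization $\phi$ is absolutely continuous with the stated derivative so that $\tilde u \in \mathcal{U}$ and $(\tilde\omega,\tilde u)$ remains admissible, and strip out rest intervals (where $|u|$ vanishes) so that $L$ is strictly increasing and invertible. The degenerate case $x=y$, where $\ell=0$ and both sides are zero, is treated separately. These are the usual technical points in converting the length functional into the energy functional, and none of them invoke sub-Riemannian machinery beyond the finiteness guaranteed by Assumption \ref{asmp:ctbdrif}.
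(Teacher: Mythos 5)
Your argument is correct, and it is worth noting that the paper itself supplies no proof of this lemma: it is stated as a citation to Cannarsa--Rifford, where the identity is obtained by exactly the length-versus-energy comparison you describe. So your proposal fills in the standard self-contained argument rather than diverging from the paper's route. The two halves are sound: Cauchy--Schwarz gives $\bigl(\int_0^T \sqrt{\sum_i u_i^2}\,dt\bigr)^2 \le T\int_0^T \sum_i u_i^2\,dt$, hence $d(x,y)^2 \le T\,c(x,y)$, and the constant-speed reparametrization (legitimate precisely because the system is driftless, so $\tilde u_i(s)=\phi'(s)u_i(\phi(s))$ is again admissible with $U=\R^m$ unconstrained) gives the reverse inequality after excising rest intervals. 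You are also right to flag the normalization: as literally stated the lemma holds only for $T=1$, since in general one gets $d^2(x,y)=T\,c(x,y)$; this is a (harmless) imprecision in the paper's statement rather than in your proof. The one thing I would make explicit is where Assumption \ref{asmp:ctbdrif} is actually used: Chow--Rashevsky guarantees $d(x,y)<\infty$ for every pair, so both infima are taken over nonempty admissible classes and the identity is not vacuously $\infty=\infty$; existence of minimizers is not needed for the equality itself, since both directions of your argument work with near-minimizing sequences, as you observe.
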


Given this equivalence we can state the following result on the existence of solutions for the Benamou-Brenier problem for the Sub-Riemannian cost.

\begin{corollary}
\textbf{(Sub-Riemannian energy)}
Given Assumption \ref{asmp:costco}. Suppose additionally that the system \eqref{eq:ctrsyspre} is a controllable driftless system according to Assumption \ref{asmp:ctbdrif}. Let $\mu_0, \mu_T \in \mathcal{P}_c(\Rd)$ be measures that are compactly supported.  Let $L(t,x,u) = \sum_{i=1}^m|u_i|^2$ for all $(t,x,u) \in I \times \Rd \times U$ with $U = \R^d$. Then there exists a pair $(\mu,u)$ that solves the Benamou-Brenier formulation of optimal transport \eqref{eq:main_optimbb}. 
\end{corollary}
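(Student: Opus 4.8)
The plan is to deduce the corollary directly from the general existence result Theorem~\ref{thm:existbb}. That theorem asserts that feasibility of the relaxed Benamou--Brenier problem already guarantees an optimal pair $(\mu,u)$, so the entire task reduces to producing a single feasible transport of finite cost. I would obtain this by first certifying that the Kantorovich problem admits a coupling of finite cost and then transferring feasibility to the dynamical formulation via Proposition~\ref{prop:kan2bb}.

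First I would record that the running cost $L(t,x,u)=\sum_{i=1}^m|u_i|^2$ verifies Assumption~\ref{asmp:costco}: it is continuous, convex in $u$, and coercive with $p=2$, $\alpha=1$, $\beta=0$ on the closed convex set $U=\R^d$. Next, under the Chow--Rashevsky condition of Assumption~\ref{asmp:ctbdrif} the driftless system is globally controllable, so the sub-Riemannian distance $d(x,y)$ is finite for every $(x,y)\in\Rd\times\Rd$ and is continuous on $\Rd\times\Rd$. Invoking the Lemma identifying the energy cost with the squared distance then gives $c(x,y)=d^2(x,y)$, a finite and continuous function on all of $\Rd\times\Rd$.

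The key step is feasibility of the Kantorovich problem, which here is immediate from compact support. I would take the product coupling $\gamma=\mu_0\otimes\mu_T\in\Pi$ and set $K:=\mathrm{supp}(\mu_0)\times\mathrm{supp}(\mu_T)$, a compact subset of $\Rd\times\Rd$. Continuity of $d^2$ yields $\max_{(x,y)\in K}d^2(x,y)<\infty$, so that
\[
\int_{\Rd\times\Rd} c(x,y)\,d\gamma(x,y)=\int_{K} d^2(x,y)\,d(\mu_0\otimes\mu_T)(x,y)\le \max_{(x,y)\in K} d^2(x,y)<\infty.
\]
Thus $\gamma$ is a feasible coupling of finite cost, and Proposition~\ref{prop:kan2bb} produces a controlled transport measure $\eta\in\mathcal{Y}^{\mu_T}_{\mu_0}(I\times\Rd\times U)$ with $\int L\,d\eta<\infty$; i.e.\ the relaxed Benamou--Brenier problem is feasible. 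Since compactly supported measures belong to $\mathcal{P}_p(\Rd)$ for every $p$, in particular $\mu_0,\mu_T\in\mathcal{P}_2(\Rd)$, Theorem~\ref{thm:existbb} then delivers an optimal pair $(\mu,u)$ solving \eqref{eq:main_optimbb}.

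The step I expect to be the main obstacle is that Theorem~\ref{thm:existbb} and the compactness Lemma~\ref{lem:comp} rest on the sublinear-growth Assumption~\ref{asmp:sublin}, whereas smooth bracket-generating fields need not grow sublinearly. I would resolve this by localization, exploiting that the data is compactly supported and the cost is a kinetic energy. By Cauchy--Schwarz a trajectory of energy at most $c$ has sub-Riemannian length at most $\sqrt{Tc}$, hence remains inside the closed sub-Riemannian ball of that radius about its (compactly supported) initial point; along any cost-bounded minimizing sequence all competitors therefore stay in a fixed bounded set. One may then replace each $f_i$ by a field $\tilde f_i$ agreeing with $f_i$ on a large ball containing every relevant trajectory and truncated to have sublinear growth outside, leaving the transport problem and its optimal value unchanged while bringing it under the hypotheses of Theorem~\ref{thm:existbb}. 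The technical point to verify carefully is that the sub-Riemannian balls generated by $\{f_i\}$ are bounded in $\Rd$, i.e.\ completeness of the metric, which is where the smoothness and the cost structure are used.
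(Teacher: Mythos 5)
Your argument is essentially identical to the paper's proof: continuity (and finiteness) of the sub-Riemannian distance under the Chow--Rashevsky condition gives continuity of $c=d^2$, the product coupling $\mu_0\otimes\mu_T$ restricted to the compact product of supports certifies finite Kantorovich cost, Proposition~\ref{prop:kan2bb} transfers feasibility to the relaxed dynamical problem, and Theorem~\ref{thm:existbb} (with Theorem~\ref{thm:equivkbb}) concludes. Your closing worry about sublinear growth is reasonable but moot under the paper's reading --- the standing Assumption~\ref{asmp:sublin} is in force throughout (the corollary's hypothesis list appears to carry the paper's recurring citation typo), so the truncation detour, whose own completeness step you correctly flag as unproven from bracket generation alone, is not needed.
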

\begin{proof}
It is known 
from to Assumption \ref{asmp:ctbdrif}, $d(x,y)$ is well defined for all $(x,y) \in \Rd \times \Rd$, and continuous. See \cite[Theorem 3.31]{agrachev2019comprehensive}. 

Therefore, $c(x,y) = d^2(x,y)$ is continuous.  This implies that the Kantorovich problem \eqref{eq:kant} has a feasible solution. That is, one can take the product measure $\gamma_{so} = \mu_0 \otimes \mu_T$ and we have that $\int_{\Rd \times \Rd} c(x,y) d\mu_0(x)d\mu_T(y) < \infty$. This implies that the relaxed Benamou-Brenier formulation \eqref{eq:main_optimrbb} is feasible due to feasibility of the Kantorovich problem due to Proposition \ref{prop:kan2bb}. Then the result follows from Theorem \ref{thm:existbb} and Theorem \ref{thm:equivkbb}.
\end{proof}

Next we consider the class of controllable linear systems.

 \begin{assumption}
 \label{asmp:ctbllin}
 \textbf{(Controllable linear time invariant system})
Suppose $f_0(x) =Ax$ and $f_i(x) =b_i$ for $A \in \Rd \times \Rd$, and $b_i \in \Rd$ such that $(A,B):=(A,[b_1,...,b_m])$ satisfies the Kalman rank condition, 
\begin{equation}
{\rm rank}~ [B~AB~....A^{d-1}B] = d
\end{equation}
 \end{assumption} 

Given this assumption, we can state the following result for the well known linear quadratic costs. 
\begin{corollary}
\textbf{(Linear quadratic cost)}
Let $\mu_0, \mu_T \in \mathcal{P}_2(\Rd)$. Suppose the control system \eqref{eq:ctrsyspre} is a controllable LTI  according to Assumption \eqref{asmp:ctbllin}. Let the running cost be given by $L(t,x,u) = \langle x,Qx  \rangle_2 +\langle u,Ru  \rangle_2$ for some postive semidefinite matrix $Q \in \R^{d  \times d}$ and positive definite matrix $R \in \R^{n \times n}$,  for all $(t,x,u) \in I \times \Rd \times U$ with $U = \R^d$.  Then there exists a pair $(\mu,u)$ that solves the Benamou-Brenier formulation of optimal transport \eqref{eq:main_optimbb}. 
\end{corollary}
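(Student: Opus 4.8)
The plan is to follow exactly the template of the preceding Sub-Riemannian corollary: reduce the existence of a Benamou-Brenier minimizer to feasibility of the Kantorovich problem, then invoke Theorem \ref{thm:existbb} together with Theorem \ref{thm:equivkbb}. First I would verify that the LQ data satisfy the two standing hypotheses. The running cost $L(t,x,u) = \langle x, Qx\rangle_2 + \langle u, Ru\rangle_2$ is continuous, convex in $u$ (its Hessian in $u$ is $2R \succ 0$), and since $Q \succeq 0$ and $R$ is positive definite we have $L(t,x,u) \geq \lambda_{\min}(R)|u|^2$, so the coercivity condition of Assumption \ref{asmp:costco} holds with $p = 2$, $\alpha = \lambda_{\min}(R)$, $\beta = 0$; thus Assumption \ref{asmp:costco} is met. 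The vector fields $f_0(x) = Ax$ and $f_i(x) = b_i$ are $C^1$ and grow at most linearly ($|Ax| \leq \|A\|\,|x|$, $|b_i|$ constant), so Assumption \ref{asmp:sublin} holds as well.

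The crux, and the step I expect to be the main obstacle, is establishing that the optimal cost $c(x,y)$ is finite everywhere and grows at most quadratically, so that the product coupling integrates it. Since only an upper bound on $c(x,y)$ is needed, it suffices to exhibit a single admissible control steering $x$ to $y$ in time $T$ and to bound its cost. The Kalman rank condition in Assumption \ref{asmp:ctbllin} guarantees invertibility of the controllability Gramian $W_T := \int_0^T e^{A(T-s)}BB^T e^{A^T(T-s)}\,ds$, so the minimum-energy steering control $u(t) = B^T e^{A^T(T-t)} W_T^{-1}\big(y - e^{AT}x\big)$ is admissible and drives $\omega(0) = x$ to $\omega(T) = y$. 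Substituting this control into the explicit trajectory $\omega(t) = e^{At}x + \int_0^t e^{A(t-s)}Bu(s)\,ds$ and evaluating $\int_0^T [\langle \omega, Q\omega\rangle_2 + \langle u, Ru\rangle_2]\,dt$ yields a quadratic form in $(x,y)$; hence there is a constant $C > 0$ with $c(x,y) \leq C(1 + |x|^2 + |y|^2)$ for all $(x,y) \in \Rd \times \Rd$. In particular $c$ is finite everywhere.

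With this bound in hand the remaining steps are routine. Taking $\gamma = \mu_0 \otimes \mu_T \in \Pi$ and using $\mu_0, \mu_T \in \mathcal{P}_2(\Rd)$, I obtain
\[
\int_{\Rd \times \Rd} c(x,y)\,d\gamma(x,y) \leq C\Big(1 + \int_{\Rd}|x|^2\,d\mu_0(x) + \int_{\Rd}|y|^2\,d\mu_T(y)\Big) < \infty,
\]
so the Kantorovich problem \eqref{eq:kant} is feasible with finite cost. By Proposition \ref{prop:kan2bb} this feasibility transfers to the relaxed Benamou-Brenier problem \eqref{eq:main_optimrbb}, which is therefore feasible. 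Since the coercivity exponent is $p = 2$ and $\mu_0, \mu_T \in \mathcal{P}_2(\Rd)$, Theorem \ref{thm:existbb} then produces a minimizing pair $(\mu,u)$ for \eqref{eq:main_optimbb}, and Theorem \ref{thm:equivkbb} guarantees $C_{BB}(\mu_0,\mu_T) = C_{\rm kan}(\mu_0,\mu_T)$, completing the argument. The only genuinely new work beyond the Sub-Riemannian case is the quadratic growth estimate of the second paragraph; everything else is a verbatim application of the general machinery already established.
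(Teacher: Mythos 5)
Your proposal is correct and follows the same overall skeleton as the paper: bound $c(x,y)$ by a quadratic in $(x,y)$, integrate against the product coupling $\mu_0\otimes\mu_T$ to get feasibility of the Kantorovich problem, pass to the relaxed Benamou--Brenier problem via Proposition \ref{prop:kan2bb}, and conclude with Theorems \ref{thm:existbb} and \ref{thm:equivkbb}. The one step you handle differently is the source of the quadratic growth estimate. The paper quotes the exact representation from \cite{hindawi2011mass}, namely $c(x,y)=\langle x,Dx\rangle_2-\langle x,Ey\rangle_2+\langle y,Fy\rangle_2$ with $D,F$ positive definite and $E$ invertible, and reads off $c(x,y)\le C(|x|^2+|y|^2)$. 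You instead exhibit the Gramian-based minimum-energy steering control $u(t)=B^Te^{A^T(T-t)}W_T^{-1}(y-e^{AT}x)$, whose admissibility follows from the Kalman rank condition, and observe that both the control and the induced trajectory are affine in $(x,y)$ with coefficients continuous on $[0,T]$, so the cost of this particular (generally suboptimal) policy is a quadratic form in $(x,y)$ that upper-bounds $c$. Your route is more elementary and self-contained: it needs only Gramian invertibility, not the full value-function representation. The paper's citation buys the exact quadratic form, which is reused later (in Theorem \ref{thm:uniquesol2}) to get local semiconcavity of $c$ and hence $BV_{\rm loc}$ regularity of the feedback; for the present existence statement only the upper bound matters, so your argument suffices. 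Your explicit verification of the Tonelli and sublinear-growth assumptions, which the paper leaves implicit, is a welcome addition.
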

\begin{proof}
According to \cite{hindawi2011mass}, there exists matrices $D,E,F \in \R^{d \times d}$ such that $D$ and $F$ are positive definite and $E$ is invertible such that 
\[c(x,y) = \langle x,Dx\rangle_2 - \langle x,Ey\rangle_2+\langle y,Fy \rangle_2\]
for all $(x,y) \in \Rd \times \Rd$. Consider the product measure once again $\gamma_{so} = \mu_0 \otimes \mu_T$. We can see that 
\[c(x,y) \leq C(|x|^2 + |y|^2) \]
for all $(x,y) \in \Rd \times \Rd$ for some $C>0$, that is independent of $x$ and $y$. Since, $\mu_0, \mu_T \in \mathcal{P}_2(\Rd)$, we can infer that $c$ is integrable with respect to $\gamma_{so}$ and \[\int_{\Rd \times \Rd}c(x,y)d \gamma_{so} <\infty .\] This implies that the relaxed Benamou-Brenier formulation \eqref{eq:main_optimrbb} is feasible due to feasibility of the Kantorovich problem due to Proposition \ref{prop:kan2bb}. Then the result again follows from Theorem \ref{thm:existbb} and \ref{thm:equivkbb}.
\end{proof}

\subsection{Convexification of Benamou-Brenier problem}

In general, the Benamou-Brenier problem is not convex in the parameters $(\mu,u)$. However, for certain choices of costs one can convexify the problem in the following way, as done in \cite{benamou2000computational} for the case $L(t,x,u) = |u|^2$ and $f(x,u) = u $ with $m= d$. In this section, we briefly look at when such a convexification is possible for control constrained transport problems.
Towards this end let $\mathcal{M}([0,T] \times \Rd ; \R^m)$ be the set of vector valued signed Borel measures on $\Rd$ with range in $\R^m$.

We define the functional $\mathcal{F}: \mathcal{M}([0,T] \times \Rd ; \R^m) \times C([0,T];\mathcal{P}_1(\Rd)) \rightarrow \mathbb{R} \cup \{\infty\}$ by

\begin{equation} 
\mathcal{F}(m,\mu) =
\begin{cases}
\int_0^T \int_{\Rd} s(x)d\mu_t(x) + \int_0^T \int_{\Rd} K(m(t,x),\mu_t(x))dxdt, \\
~~~~~~ \text{if} ~~dm(x,t)= m(x,t)dxdt, ~~d\mu_t(x)dt= \mu_t(x)dxdt \\
+\infty ; \text{otherwise}
\end{cases}
\end{equation}
where $s:\mathbb{R}^d \rightarrow \mathbb{R}$ is the state cost and $\psi:\mathbb{R}^m \rightarrow \R$ and $K $ is the control cost defined by 
\begin{equation}
K(a,b) = 
\begin{cases}
b\psi(\frac{a}{b}) ;~~ \text{if}~ b>0 \\
0 ; ~~\text{if}~ a=0, ~ b =0 \\
+\infty ; ~~\text{if}~ a \neq 0, ~b = 0, ~\text{or}~ b<0
\end{cases}
\end{equation}
If $\psi$ is convex, then it can be verified that $K$ is convex. For instance, suppose $b_1, b_2>0$. Then 
\begin{align*}
K(\lambda_1 a_1 +\lambda_2 a_a, \lambda_1 b_1 +\lambda_1 b_2) = (\lambda b_1 + (1-\lambda )b_2) \psi (\frac{\lambda a_1 + (1-\lambda )a_2}{\lambda b_1 + (1-\lambda )b_2})  \\
 = (\lambda b_1 + (1-\lambda )b_2) \psi ( \tilde{\lambda} \frac{a_1}{b_1} + (1-\tilde{\lambda}  ) \frac{a_2}{b_2})  
\end{align*}
where $\tilde{\lambda}  =  \frac{\lambda b_1} {\lambda b_1 + (1-\lambda) b_2 } $. Hence, due to the convexity of $\psi$ 
\begin{align*}
& K(\lambda_1 a_1 +\lambda_2 a_a, \lambda_1 b_1 +\lambda_1 b_2) \\ & \leq  (\lambda b_1 + (1-\lambda )b_2)  \tilde{\lambda} \psi (\frac{a_1}{b_1}) + (\lambda b_1 + (1-\lambda )b_2)  (1 -\tilde{\lambda}) \psi (\frac{a_2}{b_2})   \\
& = \lambda b_1   \psi (\frac{a_1}{b_1}) + (1-\lambda )b_2  \psi (\frac{a_2}{b_2})  \\ 
& = \lambda K(a_1,b_1) + (1-\lambda) K (a_2,b_2)
\end{align*}

Given these definitions, we define the feasbile set of the convexified problem.

\begin{align} 
AD(\mu_0,\mu_T) = &\{ (m,\mu) \in \mathcal{M}([0,T] \times \Rd ; \R^m) \times C([0,T];\mathcal{P}_1(\Rd)),~ m << \mu; \\
  &  \int_{I }\int_{\Rd} [ \partial_t \phi + \partial_x\phi \cdot f(x,\frac{dm(x,t)}{d\mu_t(x)})]  d\mu_t(x)dt \nonumber \\
   & = \int_{\Rd}\phi(T,x)d\mu_T(x) - \int_{\Rd}\phi(0,x)d\mu_0(x), ~~ \forall \phi \in C^1_c(I \times \Rd)  \}
\end{align}
Note that, due to $f(x,u)$ being affine in $u$, the map $(m,\mu) \mapsto \int_{I }\int_{\Rd} [ \partial_t \phi + \partial_x\phi \cdot f(x,\frac{dm(x,t)}{d\mu_t(x)})] $ is linear in $(m,\mu)$. Hence, the set $C^{ac}(\mu_0,\mu_T)$ is a convex set. Given these definitions, we are interested in the following convex Benamou-Brenier problem.

\begin{align}
C^{conv}_{BB}(\mu_0,\mu_T) = \inf_{(m,\mu) \in AD(\mu_0,\mu_T)} \mathcal{F}(m,\mu)
\end{align} 
The following result can be stated for this problem.
\begin{theorem}
\label{thm:conbb}
Given Assumption \ref{asmp:costco} (Tonelli running cost) and \ref{asmp:costco} (vector-fields with sub-linear growth). Suppose $\mu_0, \mu_T \in \mathcal{P}_1(\Rd)$ are absolutely continuous with respect to the Lebesgue measure and compactly supported. Let $L(t,x,u) := s(x) + \psi(u)$ with $\psi$ convex, for all $(t,x,u) \in I \times \Rd \times U$ with $U = \R^d$. Suppose there exists a solution $(u,\mu)$ to the Benamou-Brenier formulation \eqref{eq:main_optimbb} such that $\mu_t$ is absolutely continuous with respect to the Lebesgue measure for Lebesgue almost every $t \in [0,T]$. Then we have
\[C_{BB}(\mu_0,\mu_T) = C^{conv}_{BB}(\mu_0,\mu_T) \]
\end{theorem}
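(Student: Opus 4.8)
The plan is to prove both inequalities through the change of variables $m = u\mu$, meaning $dm(t,x) = u(t,x)\,d\mu_t(x)\,dt$, under which the running cost $L(t,x,u) = s(x) + \psi(u)$ transforms so that the Benamou--Brenier objective becomes exactly $\mathcal{F}(m,\mu)$. The central identity is that whenever $\mu_t$ is absolutely continuous with density $\mu_t(x)$ and $m \ll \mu$ with space-time density $m(t,x)$, the control $u = dm/d\mu_t = m(t,x)/\mu_t(x)$ satisfies
\[
\int_{\Rd} \psi(u(t,x))\,d\mu_t(x) = \int_{\Rd} \mu_t(x)\,\psi\!\Big(\tfrac{m(t,x)}{\mu_t(x)}\Big)\,dx = \int_{\Rd} K(m(t,x),\mu_t(x))\,dx,
\]
while the state part $\int_{\Rd} s\,d\mu_t$ is literally the same term in both functionals. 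Moreover, since $f(x,u)$ is affine in $u$, the defining constraint of $AD(\mu_0,\mu_T)$ with $u = dm/d\mu_t$ inserted is exactly the weak continuity equation \eqref{eq:weTaq}. Thus, once absolute continuity is available, the two feasible sets are in cost-preserving bijection.

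For the inequality $C_{BB} \leq C^{conv}_{BB}$ I would take any $(m,\mu) \in AD(\mu_0,\mu_T)$ with $\mathcal{F}(m,\mu) < \infty$; by the definition of $\mathcal{F}$ this forces $dm = m(t,x)\,dx\,dt$ and $d\mu_t = \mu_t(x)\,dx$ to be absolutely continuous with $m \ll \mu$. Setting $u(t,x) = m(t,x)/\mu_t(x)$ on $\{\mu_t > 0\}$ and $u = 0$ elsewhere produces a measurable control for which $(u,\mu)$ solves \eqref{eq:weTaq} (this is precisely the $AD$ constraint), is admissible since $U = \Rd$, and whose Benamou--Brenier cost equals $\mathcal{F}(m,\mu)$ by the identity above. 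Hence $C_{BB} \leq \mathcal{F}(m,\mu)$, and taking the infimum over $AD(\mu_0,\mu_T)$ gives $C_{BB} \leq C^{conv}_{BB}$. This direction needs no additional hypothesis.

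For the reverse inequality $C^{conv}_{BB} \leq C_{BB}$ I would use the hypothesized optimal pair $(u,\mu)$ of \eqref{eq:main_optimbb} with each $\mu_t$ absolutely continuous, and define the vector measure $m$ by $dm(t,x) = u(t,x)\,d\mu_t(x)\,dt$. The key technical step is to verify that $m$ is absolutely continuous with respect to the space-time Lebesgue measure: coercivity in Assumption \ref{asmp:costco} together with finiteness of the optimal cost gives $\int_I\!\int_{\Rd}|u|^p\,d\mu_t\,dt < \infty$ for some $p>1$, and since each $\mu_t$ is a probability measure, Hölder's inequality in space followed by Hölder in time over the finite interval $I$ yields $u \in L^1(d\mu_t\,dt)$, so that $m(t,x) = u(t,x)\mu_t(x) \in L^1(dx\,dt)$. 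Combined with $m \ll \mu$ and the absolute continuity of $\mu_t$, this makes $(m,\mu)$ admissible with $\mathcal{F}(m,\mu)$ finite, while the $AD$ constraint holds because $(u,\mu)$ solves \eqref{eq:weTaq}. The cost identity then gives $C^{conv}_{BB} \leq \mathcal{F}(m,\mu) = C_{BB}(\mu_0,\mu_T)$, and the two inequalities together establish the claimed equality.

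I expect the main obstacle to be bookkeeping rather than conceptual: confirming that $m = u\mu$ genuinely lands in $\mathcal{M}([0,T]\times\Rd;\R^m)$ as an absolutely continuous measure with finite $\mathcal{F}$, and that the optimal curve $\mu$ lies in $C([0,T];\mathcal{P}_1(\Rd))$ so as to be admissible for the convex problem. The absolute continuity of $\mu_t$ is supplied by hypothesis and cannot be dropped in general: this is precisely the role of the assumption, because the Benamou--Brenier infimum ranges over possibly singular marginals whereas $C^{conv}_{BB}$ is finite only on absolutely continuous pairs, so without an absolutely continuous optimizer one could only guarantee $C_{BB} \leq C^{conv}_{BB}$. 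I would also note that the narrow-to-$\mathcal{P}_1$ continuity of $\mu$ and the finiteness of $\int_{\Rd} s\,d\mu_t$ follow from the uniform moment bound of Lemma \ref{lem:comp} (with $p>1$, so $\mathcal{P}_p(\Rd) \subseteq \mathcal{P}_1(\Rd)$ and $|x|$ is uniformly integrable along the optimal curve), which upgrades narrow continuity to continuity in $\mathcal{P}_1(\Rd)$.
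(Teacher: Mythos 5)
Your proof is correct and takes essentially the same route as the paper's: both directions come from the cost-preserving correspondence $m = u\mu$ and $u = dm/d\mu_t$, with the absolute-continuity hypothesis on $\mu_t$ used only for the inequality $C^{conv}_{BB} \leq C_{BB}$. Your version merely fills in the bookkeeping (the identity $\int_{\Rd}\psi(u)\,d\mu_t = \int_{\Rd}K(m,\mu_t)\,dx$ and the integrability of $m$) that the paper's proof leaves implicit.
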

\begin{proof}
Suppose $(m,\mu) \in AD(\mu_0,\mu_T)$ is such that  $\mathcal{F}(m,\mu) < \infty$ then it is easy to see that there exists $(u,\mu)$ that $\int_I \int_{\Rd } L(t,x,u(t,x)) d\mu_t(x)dt  = \mathcal{F}(m,\mu)$, by setting $u(t,x) = \frac{dm(x,t)}{d\mu_t(x)}$. This implies that $C^{conv}_{BB}(\mu_0,\mu_T) \geq C_{BB}(\mu_0,\mu_T)$. Now, suppose $(u,\mu)$ is such that it solves the Benamou-Brenier problem \eqref{eq:main_optimbb}. Then clearly, by setting $m= u \mu$, we get that $(m,\mu) \in AD(\mu_0,\mu_T)$ and by assumption we can choose $\mu_t$ to be absolutely continuous for almost every $t \in [0,T]$ we get that $\mathcal{F}(m,\mu) = C_{BB}(\mu_0,\mu_T)$.
\end{proof}

Next, we can apply this result to some concrete cases for which the curve $\mu_t$ is known to be absolutely continuous due to results from the corresponding Monge problem \cite{agrachev2009optimal,figalli2010mass}. In order to state the result we will need some additional assumptions on the system. We define the \textit{endpoint map} $\mathcal{E} : L^2(0,T;\R^m)\rightarrow \R^d$ as
\begin{equation}
\mathcal{E}: u \mapsto \omega(T),
\end{equation}
where $\omega(T)$ is the terminal point of the trajectory $\omega$ associated to the control $u$.

\begin{definition}
A trajectory-control pair $(\omega, u) \in \Omega$ is said to be a \textbf{singular curve} if the differential of the endpoint map at $u(\cdot)$,
\begin{equation}
D\mathcal{E}(u(\cdot)) : L^2([0,T]; \mathbb{R}^m) \to \Rd,
\end{equation}
is not surjective.
\end{definition}

An assumption we will need is that if the system is driftless then there exist no pair $(x,y) \in \Rd \times \Rd$, for which the minimizing trajectory pair $(\omega, u) \in \Omega$ that achieve the cost $c(x,y)$ are singular. An example of such a driftless system is if it is {\it $2$-generating}. We recall that a driftless system is said to be $2$-generating if the satisfies the controllability assumption \ref{asmp:ctbdrif} and 
$\cup_{i =0}^2 \mathcal{V}^i$ has rank $d$. That is, if ${\rm span} \{ g(x) \in \cup_{i =0}^2 \mathcal{V}^i \}$, where $\mathcal{V}^i$ is defined in Assumption \ref{asmp:ctbdrif}. Other examples of systems that do not admit singular minimizing curves include systems for which the admissible directions fall under the class of {\it fat} distributions. See \cite{figalli2010mass,rifford2014sub}. Another much larger class of distributions, known as {\it medium fat} are known {\it generically} not admit singular minimizers.

Given these definitions, we can state the following result for systems, where we recall that $D \subset \Rd \times \Rd$ is the set of points of the form $(x,x)$.

\begin{corollary}
Given \ref{asmp:costco} (vector-fields with sub-linear growth), let $L(t,x,u) = \sum_{i=1}^m|u_i|^2$ be the minimum energy cost, for all $(t,x,u) \in I \times \Rd \times U$ with $U = \R^d$. Suppose $\mu_0, \mu_T \in \mathcal{P}_1(\Rd)$ are absolutely continuous with respect to the Lebesgue measure and compactly supported. Additionally assume that the system is a controllable driftless system according to Assumption \ref{asmp:ctbdrif} for which there exists no $(x,y) \in \mathbb{R}^{d} \times \Rd \setminus (D \times D)$ admitting a singular minimizing curve and the corresponding cost function induces a complete metric on $\Rd$. Then we have
\[C_{BB}(\mu_0,\mu_T) = C^{conv}_{BB}(\mu_0,\mu_T) \]
\end{corollary}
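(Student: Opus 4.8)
The plan is to obtain this statement as a direct application of Theorem \ref{thm:conbb}, so the entire argument reduces to verifying its three hypotheses for the minimum-energy cost: that $L$ splits as $s(x)+\psi(u)$ with $\psi$ convex, that $\mu_0,\mu_T$ are absolutely continuous with compact support, and---crucially---that there exists a Benamou--Brenier optimizer $(u,\mu)$ whose time-slices $\mu_t$ are absolutely continuous for almost every $t$. The first two are essentially bookkeeping; the third is the substantive input. First I would dispose of the easy conditions: writing $L(t,x,u)=\sum_{i=1}^m|u_i|^2 = s(x)+\psi(u)$ with $s\equiv 0$ and $\psi(u)=|u|^2$, the function $\psi$ is continuous, convex, and coercive with $p=2$ (take $\alpha=1$, $\beta=0$), so Assumption \ref{asmp:costco} holds and $L$ has the required separable form with convex $\psi$. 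Absolute continuity and compact support of $\mu_0,\mu_T$ are assumed outright.

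Next I would establish existence of a Benamou--Brenier optimizer, repeating the argument of the Sub-Riemannian energy corollary. By the Lemma identifying $c(x,y)=d^2(x,y)$, the cost is the squared sub-Riemannian distance, which under Assumption \ref{asmp:ctbdrif} is finite and continuous, hence bounded on the compact supports of $\mu_0,\mu_T$. Taking the product plan $\gamma_{so}=\mu_0\otimes\mu_T$ shows the Kantorovich problem is feasible, so by Proposition \ref{prop:kan2bb} the relaxed Benamou--Brenier problem is feasible, and Theorems \ref{thm:existbb} and \ref{thm:equivkbb} together produce an optimal pair $(u,\mu)$ with matching Kantorovich value.

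The real work is the absolute continuity of the interpolant $\mu_t$, and this is where I would invoke the sub-Riemannian Monge theory. Under the standing hypotheses---$\mu_0$ absolutely continuous, the induced metric complete, and no pair in $(\Rd\times\Rd)\setminus(D\times D)$ admitting a singular minimizing curve---the results of \cite{agrachev2009optimal,figalli2010mass} provide a unique optimal transport map $T$ for the cost $d^2$, together with a well-defined displacement interpolation that is absolutely continuous at each intermediate time. To connect this with the dynamical optimizer I would use the concentration proposition: the optimal measure $\eta\in\mathcal{P}(\Rd\times\Gamma)$ is supported on minimizing (hence geodesic) curves, and since the Monge map exists we have $E_\#\eta=(\mathrm{Id},T)_\#\mu_0$, so $\mu_t=(e_t)_\#\eta$ coincides with the displacement interpolation and is therefore absolutely continuous for almost every $t\in[0,T]$. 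Feeding this admissible $(u,\mu)$ into Theorem \ref{thm:conbb} then yields $C_{BB}(\mu_0,\mu_T)=C^{conv}_{BB}(\mu_0,\mu_T)$.

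I expect the absolute continuity of $\mu_t$ to be the main obstacle, since it is the single step not furnished by the self-contained machinery of the paper: it rests on the deep regularity theory for sub-Riemannian optimal maps, where the absence of singular minimizers is exactly the condition preventing the interpolating measures from concentrating onto lower-dimensional sets. The completeness assumption enters here to guarantee that length-minimizing curves realizing $d$ exist for all endpoint pairs carried by the transport plan, so that the interpolation is genuinely defined $\gamma$-almost everywhere.
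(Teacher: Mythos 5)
Your proposal is correct and follows essentially the same route as the paper: both reduce the statement to Theorem \ref{thm:conbb} and obtain the decisive hypothesis---absolute continuity of the interpolating measures $\mu_t$---from the Figalli--Rifford Monge theory for sub-Riemannian costs under the no-singular-minimizers assumption. The only (immaterial) difference is in packaging: the paper constructs the admissible pair $(u,\mu)$ directly from the unique Monge map and its optimal trajectories and checks it attains the Kantorovich value, whereas you take an abstract Benamou--Brenier optimizer and identify its interpolation with the displacement interpolation via the concentration proposition, which additionally requires (and implicitly uses) the uniqueness of the optimal plan and of the minimizing curves furnished by the same theory.
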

\begin{proof}
Under the assumptions made on the absence of singular minimizers, from \cite{figalli2010mass}[Theorem 5.9], we know that the the cost function is locally semiconcave on $\Rd \times \Rd \setminus D$. In this case, it is known due to \cite{figalli2010mass}[Theorem 6.3],  the Kantorovich problem \eqref{eq:kant} is equivalent to the Monge problem
\[\inf_{T:\Rd \rightarrow \Rd; \\ T_{\#}\mu_0 =\mu_T } = \int_{\Rd \times \Rd} c(x,T(x))d\mu_0(x)\]
Moreover, for $\mu_0$ almost every $x \in \Rd$ there exists an unique admissible pair $(\omega_x,\alpha_x) \in \Omega$ such that $T(x) = \omega_x(T)$. Define $S:\Rd \times \Rd \rightarrow \Omega$ by,
$S(x,T(x))= (\omega_x,\alpha_x) $ for $\mu_0$ almost every $x \in \Rd$. This map is defined $\gamma$ almost every where on $\Rd \times \Rd$ for $\gamma = (id,T)_{\#}\mu_0$ where $id:\Rd \rightarrow \Rd$ is the identity map. It is known that  $\mu_t :=[\omega_x(t)]_{\#} \mu_0$ is absolutely continuous with respect to the Lebesgue measure for almost every $t \in [0,T]$,by \cite{figalli2010mass}[Theorem 3.5]. Since the flow $x \mapsto \omega_x(t)$ injective for $\mu_0$ almost every $x \in \Rd$ and for almost every $t \in [0,T]$, we can define $u(t,x) := \alpha_x(t)$ for $\mu_t$ almost every $x \in \Rd$ for almost every $t \in [0,T]$. Clearly, 
\begin{align}
\int_\Rd c(x,T(x))d\mu_0(x) &= \int_{\Rd}\int_{I} L(t,\omega_x(t),u(t,\omega_x(t))dt d\mu_0(x) \\ 
& = \int_{\Rd}\int_{I}  
 L(t,x,u(t,x))dt d\mu_t(x) 
\end{align}
Following as in proof of Proposition \ref{prop:marg2yng}, we can establish that the couple $(u,\mu)$ solve the continuity equation. Since the Benamou-Brenier problem and the Kantorovich problem achieve the same minimum by Theorem \ref{thm:equivkbb}, we conclude the result. 
\end{proof}

Next, we can state the result for the special case of Linear time invariant systems with linear quadratic cost. This result generalizes a result due to \cite{chen2016optimal}.

\begin{corollary}
\textbf{(Linear quadratic cost)}
Let $\mu_0, \mu_T \in \mathcal{P}_2(\Rd)$. Suppose the control system \eqref{eq:ctrsyspre} is a controllable LTI  according to Assumption \eqref{asmp:ctbllin}. Let the running cost be given by $L(t,x,u) = \langle x,Qx  \rangle_2 +\langle u,Ru  \rangle_2$ for some postive semidefinite matrix $Q \in \R^{d  \times d}$ and positive definite matrix $R \in \R^{n \times n}$, for all $(t,x,u) \in I \times \Rd \times U$ with $U = \R^d$. Then the Benamou-Brenier problem is a solution to the convexified Benamou-Brenier problem.
\end{corollary}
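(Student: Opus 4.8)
The plan is to verify the hypotheses of Theorem \ref{thm:conbb}, whose conclusion is exactly the claimed identity $C_{BB}(\mu_0,\mu_T) = C^{conv}_{BB}(\mu_0,\mu_T)$. The running cost $L(t,x,u) = \langle x,Qx\rangle_2 + \langle u,Ru\rangle_2$ is already of the separated form $s(x)+\psi(u)$ with $s(x)=\langle x,Qx\rangle_2$ and $\psi(u)=\langle u,Ru\rangle_2$, and $\psi$ is convex since $R$ is positive definite; moreover the Tonelli and sublinear-growth hypotheses required by Theorem \ref{thm:conbb} hold automatically here, as the cost is coercive with $p=2$ and convex and the affine vector fields $Ax+b_i$ have linear growth. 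Hence the whole task reduces to producing a minimizer $(u,\mu)$ of the Benamou-Brenier problem \eqref{eq:main_optimbb} whose time slices $\mu_t$ are absolutely continuous with respect to the Lebesgue measure for a.e.\ $t$.

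I would construct such a minimizer along the same route as in the preceding driftless corollary, replacing the input from \cite{figalli2010mass} by the corresponding facts for controllable LTI systems with linear-quadratic cost from \cite{hindawi2011mass}. First, for a controllable LTI system the endpoint map $\mathcal{E}$ is affine with constant surjective differential (surjectivity being exactly the Kalman rank condition of Assumption \ref{asmp:ctbllin}), so no minimizing curve is singular and the value function is the explicit quadratic $c(x,y)=\langle x,Dx\rangle_2 - \langle x,Ey\rangle_2 + \langle y,Fy\rangle_2$ recorded in the earlier linear-quadratic corollary. This $c$ is smooth, hence locally semiconcave, so by \cite{hindawi2011mass} the Kantorovich problem \eqref{eq:kant} coincides with the Monge problem and admits a unique optimal map $T$ with $T_\#\mu_0=\mu_T$; for $\mu_0$-a.e.\ $x$ there is a unique optimal trajectory-control pair $(\omega_x,\alpha_x)$ joining $x$ to $T(x)$, given by the LQR two-point boundary value problem.

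I would then set $\mu_t := (\omega_x(t))_\#\mu_0$ and, on the image of the injective flow $x\mapsto\omega_x(t)$, define the feedback $u(t,x):=\alpha_{x_0}(t)$, where $x_0$ is the unique initial point with $\omega_{x_0}(t)=x$. Since the optimal trajectories solve an affine boundary value problem, $x\mapsto\omega_x(t)$ is an affine invertible map for a.e.\ $t$, so $\mu_t$ inherits absolute continuity from $\mu_0$ and the feedback is well defined with $u(t,\cdot)\in L^2(\mu_t)$. Exactly as in Proposition \ref{prop:marg2yng}, the pair $(u,\mu)$ solves the continuity equation \eqref{eq:weTaq}, and a change-of-variables computation identifies its transport cost with $\int_{\Rd\times\Rd} c\,d\gamma$ for $\gamma=(\mathrm{id},T)_\#\mu_0$, the Kantorovich optimum. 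By Theorem \ref{thm:equivkbb} this value equals $C_{BB}(\mu_0,\mu_T)$, so $(u,\mu)$ is a Benamou-Brenier minimizer with absolutely continuous slices, and Theorem \ref{thm:conbb} then yields the result.

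The main obstacle is the propagation of absolute continuity, namely showing $\mu_t\ll\mathrm{leb}$ at intermediate times rather than only at the endpoints. In the LTI--LQ setting this is more transparent than in the general sub-Riemannian case, because the LQR feedback is linear and the flow is an affine bijection with nondegenerate Jacobian; the delicate points are rather to check that the regularity of the Monge map from \cite{hindawi2011mass} is enough to define the feedback $\mu_t$-a.e.\ and to justify interchanging the time integral with the disintegration of $\mu$. I would also flag that Theorem \ref{thm:conbb} is stated for compactly supported, absolutely continuous $\mu_0,\mu_T$, so those conditions should be added to the corollary's hypotheses (the compact-support requirement can likely be relaxed using the finite second moments together with the quadratic growth of $c$).
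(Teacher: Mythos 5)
Your overall route is the same as the paper's. The paper's own proof is terse: it notes that $c$ is Lipschitz thanks to the explicit quadratic representation from \cite{hindawi2011mass}, invokes existence of a Monge solution for the LQ case via \cite{agrachev2009optimal}, and declares that the rest follows verbatim from the preceding driftless corollary --- which is precisely the construction you spell out: build the displacement interpolation $\mu_t = (\omega_x(t))_\#\mu_0$ from the Monge map, define the feedback along the injective interpolation flow, match the cost with the Kantorovich optimum via Theorem \ref{thm:equivkbb}, and feed the resulting absolutely continuous minimizer into Theorem \ref{thm:conbb}. Your remark that the corollary's statement omits the absolute-continuity and compact-support hypotheses required by Theorem \ref{thm:conbb} is a fair catch; the paper implicitly inherits them from the previous corollary.

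One step of your justification is wrong as stated, even though the conclusion it supports is correct. You claim that $x \mapsto \omega_x(t)$ is an affine invertible map because the optimal trajectories solve an affine boundary value problem, and you deduce absolute continuity of $\mu_t$ from nondegeneracy of an affine Jacobian. The optimal trajectory is affine in the endpoint pair $(x,y)$, but here $y = T(x)$ with $T$ the Monge map, which is a nonlinear, merely a.e.-defined map of gradient-of-a-semiconcave-potential type; equivalently, the Pontryagin flow is initialized at $(x, \partial_x f(x))$ with $f$ the Kantorovich potential, so the interpolation map $x \mapsto P(t)x + Q(t)T(x)$ is not affine and its invertibility is not automatic. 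The ingredient the paper actually relies on when it says ``verbatim'' is the displacement-interpolation regularity result (\cite{figalli2010mass}[Theorem 3.5] in the driftless case and its LQ analogue), which yields injectivity of the interpolation flow and $\mu_t \ll \mathrm{leb}$ for a.e.\ $t$ by a cyclical-monotonicity and semiconcavity argument rather than by linearity. With that substitution your proof closes and coincides with the paper's.
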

\begin{proof}
The cost $c(x,y)$ is Lipschitz due to the representation derived in \cite{hindawi2011mass}. Then, by first invoking the existence of Monge problem for this case due to \cite{agrachev2009optimal}[Theorem 4.1], the proof follows verbatim as in the proof of the previous corollary.
\end{proof}

\subsection{Controllability and Transport with Uniqueness of Continuity Equation Solution}

In general, the controls that transport an initial measure $\mu_0$ to the terminal value $\mu_T$ are potentially very irregular. Due to this lack of smoothness, one cannot usually guarantee that the corresponding continuity equation has a unique weak solution. In this section, we investigate under what conditions the optimal transport control laws are regular enough so that the corresponding continuity equation has a unique solution. Like in the previous section the existence of solutions to the Monge problem, as established by \cite{agrachev2009optimal,figalli2010mass} plays an important role. 

One can think of this as a stronger version of a controllability result for the continuity equation that has been proved in \cite{duprez2019approximate}, where controllability has been proved with Borel controls for the fully actuated case ($m=d$ and $g_i(x)$ are coordinate vector fields). However, the control in \cite{duprez2019approximate} that achieves (exact) controllability can cause non-unique solutions. In contrast, in the following result, using the ideas based on viscosity solutions of Hamilton Jacobi Bellman equation and the corresponding optimal synthesis problem \cite{cannarsa2004semiconcave}, we show that the corresponding continuity equation cannot develop non-unique solutions.

\begin{theorem}
\label{thm:uniquesol1}
Given \ref{asmp:costco} (vector-fields with sub-linear growth), Suppose $\mu_0, \mu_T \in \mathcal{P}_1(\Rd)$ are absolutely continuous with respect to the Lebesgue measure and compactly supported. Additionally assume that the system is a controllable driftless system according to Assumption \ref{asmp:ctbdrif} for which there exists no $(x,y) \in \Rd \times \Rd \setminus D \times D$ admitting a singular minimizing curve and the corresponding cost function induces a complete metric on $\Rd$.  Suppose $(u,\mu)$ is the solution of the Benamou Brenier problem for $L(t,x,u) = \frac{1}{2}\sum_{i=1}^m|u_i|^2$, for all $(t,x,u) \in I \times \Rd \times U$ with $U = \R^d$. 

Then for the feedback control $u$, the weak solution $\hat{\mu} \in C([0,T];\mathcal{P}_1(\Rd)$ to the continuity equation is unique.
\end{theorem}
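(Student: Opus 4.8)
The plan is to represent the transporting drift as the gradient field of a semiconcave value function and to deduce uniqueness from the forward no-crossing property of the characteristics of the associated Hamilton-Jacobi-Bellman equation. First I would introduce the dynamic value function
\[
\phi(t,x) = \inf_{\omega(t)=x}\Big\{ \tfrac12\int_t^T \sum_{i=1}^m |u_i(s)|^2\,ds + \psi(\omega(T)) \Big\},
\]
where $\psi$ is the Kantorovich potential associated with the optimal plan $\gamma=(\mathrm{id},T)_\#\mu_0$ furnished by the preceding corollary. Under Assumption \ref{asmp:ctbdrif} and the hypothesis that no minimizing curve between distinct points is singular, the results of \cite{figalli2010mass,cannarsa2008semiconcavity} show that $\phi(t,\cdot)$ is locally semiconcave and is a viscosity solution of
\[
\partial_t\phi + H(x,\nabla_x\phi)=0,\qquad \phi(T,\cdot)=\psi,\qquad H(x,p)=\tfrac12\sum_{i=1}^m \langle p,f_i(x)\rangle^2 .
\]
At points of differentiability the optimizer is $u_i(t,x)=\langle\nabla_x\phi(t,x),f_i(x)\rangle$, so that the transporting vector field is
\[
b(t,x):=f(x,u(t,x))=\sum_{i=1}^m \langle\nabla_x\phi(t,x),f_i(x)\rangle\, f_i(x)=\nabla_p H(x,\nabla_x\phi(t,x)).
\]
Since $\mu_0,\mu_T$ are compactly supported and the induced metric is complete, all optimal trajectories remain in a fixed compact set $Q$, so only local estimates on $Q$ are needed.

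The second step is to reduce uniqueness to a property of the characteristics. Given any weak solution $\hat\mu\in C([0,T];\mathcal P_1(\Rd))$ of \eqref{eq:weTaq} with drift $b$, the superposition principle \cite[Theorem 3.4]{ambrosio2014continuity} yields $\Theta\in\mathcal P(\Rd\times\Gamma)$ with $(e_t)_\#\Theta=\hat\mu_t$ and $(e_0)_\#\Theta=\mu_0$, concentrated on integral curves of $\dot\omega=b(t,\omega)$. I would then run a verification argument: along a $\Theta$-a.e.\ curve, the chain rule together with the HJB equation and the Fenchel equality $\langle\nabla_x\phi,b\rangle - \tfrac12\sum_i\langle\nabla_x\phi,f_i\rangle^2 = H(x,\nabla_x\phi)$ shows that $t\mapsto\phi(t,\omega(t))$ decreases at exactly the rate of the running cost, so each such curve is a minimizing trajectory for its own endpoints. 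Because no two distinct points admit a singular minimizer and optimal trajectories from $\mu_0$-a.e.\ point are unique, the measure $\Theta$ is forced to coincide with the optimal lift of $\mu_0$, whence $\hat\mu_t=\mu_t$.

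The crux, and the step I expect to be the main obstacle, is that an arbitrary weak solution $\hat\mu_t$ need not be absolutely continuous and may therefore place mass on the singular set where $\phi(t,\cdot)$ fails to be differentiable; there $b$ is only defined almost everywhere and classical Cauchy-Lipschitz uniqueness is unavailable. To close the gap I would exploit the semiconcavity of $\phi$ to obtain a one-sided (Filippov-type) estimate along pairs of trajectories of the generalized characteristics, of the form $\tfrac{d}{dt}|\omega_1(t)-\omega_2(t)|^2\le C_Q|\omega_1(t)-\omega_2(t)|^2$, which holds even through the singular set precisely because the superdifferential of a semiconcave function is monotone in the direction favorable to forward time; this is the role played by the techniques of \cite{cannarsa2004semiconcave,rifford2009stabilization}. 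Verifying that the sign of this estimate is consistent with forward integration of the continuity equation, and that the quadratic error terms generated by the state dependence of the $f_i$ do not spoil it, is the technical heart of the argument; once it is in place, forward uniqueness of the characteristics makes the superposition $\Theta$, and hence $\hat\mu$, unique.
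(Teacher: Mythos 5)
Your overall architecture (Kantorovich potential, value function, HJB, semiconcavity, superposition principle, then uniqueness of characteristics) matches the paper's, and your verification step --- showing that the value function decreases along any characteristic at exactly the running-cost rate, so that every characteristic is a minimizing trajectory for its own endpoints --- is indeed the paper's central mechanism (carried out via \cite{rifford2009stabilization}[Lemma 2.2], using that the feedback is only defined where $V$ is differentiable, so any Carath\'eodory solution meets the differentiability set at a.e.\ time). But the step you yourself flag as the technical heart, the one-sided estimate $\frac{d}{dt}|\omega_1(t)-\omega_2(t)|^2\le C_Q|\omega_1(t)-\omega_2(t)|^2$, is not how the paper closes the argument, and it would not close it. First, the drift is $b=\sum_i\langle\nabla_x\phi,f_i\rangle f_i=G(x)G(x)^T\nabla_x\phi$, not $\nabla_x\phi$ itself; the monotonicity $\langle\nabla\phi(x)-\nabla\phi(y),x-y\rangle\le C|x-y|^2$ furnished by semiconcavity says nothing about $\langle GG^T(x)\nabla\phi(x)-GG^T(y)\nabla\phi(y),x-y\rangle$, because the component of $\nabla\phi(x)-\nabla\phi(y)$ orthogonal to $x-y$ --- which semiconcavity does not control near the singular set --- is rotated by the state-dependent matrix $GG^T$ into the $x-y$ direction. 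Second, with your backward value function (an infimum over future trajectories with terminal datum $\psi$), the semiconcave object generates the feedback $u_i=-\langle\nabla_x\phi,f_i\rangle$ (your sign is off), and the resulting one-sided estimate then points the wrong way for forward-in-time uniqueness. So the Filippov route is a genuine dead end here, not merely an unchecked detail.

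The paper instead exploits optimality directly, and this requires a case split that your proposal omits. Write $M=\{x: T(x)\neq x\}$ for the moving set and $S=M^c$ for the static set. For $\mu_0$-a.e.\ $x\in M$ the potential $f$ is differentiable, the characteristic is the solution of the Pontryagin system with initial covector $\partial_x f(x)$, and the Hamiltonian flow is Lipschitz, so uniqueness holds; this is the only part your appeal to ``optimal trajectories are unique'' covers. For $x\in S$ the optimal trajectory is constant, and uniqueness of minimizers over $[0,T]$ does not by itself exclude a characteristic that idles at $x$ on $[0,\tau]$ and then enters $M$: your verification argument only makes such a curve optimal for its own distinct endpoints over the subinterval where it moves. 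The paper eliminates this case by a reparametrization argument: since the system is driftless, spreading the same control over all of $[0,\tau+\varepsilon]$ joins the same endpoints at strictly smaller energy, contradicting that optimality. Without the static-set argument the proof is incomplete even granting the moving-set part; with it, the conclusion follows from \cite{ambrosio2008transport}[Theorem 3.1], which converts $\mu_0$-a.e.\ uniqueness of characteristics into uniqueness of the weak solution, in place of your Filippov estimate.
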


\begin{proof}
Consider the dual Kantorovich problem
\begin{equation}
\label{eq:kant_dual}
C_{\rm kan,du}=\sup_{(\phi,\psi) \in K} \int \phi(x) d\mu_0(x) - \int \psi(x) d\mu_T(x) 
\end{equation}
where $K : = \lbrace (\phi, \psi) \in L^1(\mu_0) \times L^1(\mu_T) ; \phi(x) - \psi(y) \leq c(x,y), ~ \forall (x,y) \in \Rd \times \Rd \rbrace$.
 From \cite{figalli2010mass}[Theorem 3.2], the primal problem \eqref{eq:kant} and the above dual problem have the same solution $C_{\rm Kan, du} =C_{\rm kan}$. The cost function is continuous. Hence, the pair that achieve the supremum in the dual problem \eqref{eq:kant_dual} can be taken to be of the form $(f,f^c)$ functions $f:\Rd \rightarrow \R$ and $f^c:\Rd \rightarrow \R$, where $f^c$ are continuous functions given by 
\[f^c(y) = \inf_{x \in \mathbb{R}^d} c(x,y) + f(x)\]
We define the value function $V: [0,T] \times \Rd \rightarrow \R$,
\[V(y,t) = \inf_{(\gamma,u) \in \Omega; \gamma(t) = y} \int_0^t L(x(t),u(t))dt + f(\gamma(0))\]
and hence $V(y,T) = f^c(y)$ for all $y \in \Rd$.
Now, it is known from \cite{bardi1997optimal}[Proposition 3.5] (See also \cite{cannarsa2004semiconcave}) that the value function is a viscosity solution of the Hamilton Jacobi Bellman (HJB) equation,
\begin{align}
\partial_t V + H(y,\partial_x V)= 0  \\
V(x,0) = f(x)
\end{align}
where $H(y,p) = \max_{u} [p^Tf(x,u)+ \sum_{i=1}^m\frac{|u_i|^2}{2}] $. Note that theorem in \cite{bardi1997optimal,cannarsa2004semiconcave} make assumptions on the compactness of the controls unlike the hypothesis of the current theorem. However, for initial conditions from a compact set, a global bound on the $L^{\infty}$ norm of the optimal control can be derived to apply the result of \cite{cannarsa2004semiconcave} using the fact that the optimal controls are regular due to the non-existence of non-trivial singular minimizing curves (\cite{cannarsa2008semiconcavity}[Proposition 3.3]). Given the assumption that the system is driftless, the HJB equation reduces to
\begin{align}
\partial_t V + \frac{1}{2}\sum_{i=1}^m | g^T_i \partial_x V|^2= 0  \\
V(x,0) = f(x)
\end{align}
Let $T:\mathbb{R}^d \times \mathbb{R}^d \rightarrow \Rd$ be the solution to the corresponding Monge problem. Define the set, $M = \{x \in \Rd; T(x) \neq x \}$ and $S = M^c$. where the superscript $c$ denotes the complement of $M$. The set $M$ and $S$ is known as the moving set and the static set of the transport map. It is known that the set $M$ is open and $f$ is locally semiconcave on a neighborhood of $M \cap {\rm sup} ~\mu_0$ by \cite{figalli2010mass}[Theorem 3.2]. From this it also follows that $V$ is locally semiconcave on $[0,T] \times \tilde{M}$ where $\tilde{M}$ is an open neighborhood of $M \cap {\rm sup} ~\mu_0$. See \cite{cannarsa2004semiconcave}[Theorem 7.4.11]. While the theorem is stated for local semiconcavity on $[0,T] \times \Rd$ when $f$ is locally semiconcave on $\Rd$, since $\tilde{M}$ is open, the result \cite{cannarsa2004semiconcave}[Theorem 7.4.11] can be proved similarly for local semiconcavity of $V$ on $[0,T]\times \tilde{M}$ , when $f$ is only locally semiconcave on $\tilde{M}$.

According to \cite{cannarsa2004semiconcave}[Theorem 7.4.17], for $\mu_0$ almost every $x \in M$ the optimal trajectory is given by the solution of the Pontryagin system,

\begin{align}
\label{eq:pmpsys}
   &  \dot{\omega_x}(t) = \sum_{i=1}^m  u^i_x(t) g_i(x)  \\
   &   \dot{p}_x(t) = -\hat{H}_{\omega} (\omega_x(t),p(t)) \\
   &  \omega_x(0) = x \\
   &   p_x(0) = \partial_x f(x)      \\
   &   u_x(t) = \argmax_{u} [p^T(t)f(x,u) + \frac{u^2}{2}] =  -p_x(t)^TG(x)
\end{align}

where $G(x) : = [g_1(x),...g_m(x)]$ and $\hat{H}(x,p): = p^Tf(x,u + \sum_{i=1}^m \frac{|u_i|^2}{2})$ is the unmaximized Hamiltonian. Note that since $f$ is locally semiconcave on $\tilde{M}$, $\partial_x f(x)$  exists $\mu_0$ almost every $x \in M$. 
By the relation between Pontryagin Maximum Principle (PMP) and HJB equations, $p_x(t) = \partial_y V(t,\omega_x(t))$ for almost every $t $ and almost $\mu_0$ almost every $x \in M$ since $V$ is locally semiconcave on $\tilde{M}$ by \cite{cannarsa2004semiconcave}[Theorem 7.4.16]. This implies $V(t,y)$ is differentiable for $\mu_t$ every $y \in M$ and almost every $t \in [0,T]$. For $i=1,...,m$, let us define the control 
\begin{equation}
u_i(x,t) = 
\begin{cases}
-g^T_i(x) \partial_x V(x,t) ,~{\rm if}~ x \in M \cap {\rm sup}~ \mu_t \\
0,~{\rm otherwise}
\end{cases}
\end{equation}
From this construction we can see that for $\mu_0$ almost every $x \in \Rd$, the state-trajectories $\omega_x(t)$ satisfy the differential equation,
\begin{align}
\label{eq:optimalODE}
 & \dot{\omega_x}(t) = \sum_{i=1}^m u_i(t,\omega_x(t)) g_i(\omega_x(t)) \\
  &  \omega_x(0) = x 
\end{align}
We want to prove that the corresponding solution of the continuity equation is unique for $\mu_0$ almost every $x \in \Rd$. Toward this end we use a result on relation between uniqueness of solutions of the ODEs and uniqueness of weak solutions of the continuity equation \cite{ambrosio2008transport}. Additionally, we will use the fact the for $\mu_0$ almost every $x \in \Rd$ there exists a unique optimal trajectory. 

Suppose there is a solution $\gamma(t)$ for the ODE \eqref{eq:optimalODE} with $\gamma(0) = x$ and $\gamma \neq \omega_x$ for any $x$. We consider two cases. First, suppose $x \in S$. That is, the initial condition starts at the static set. Since $\omega_x \neq \gamma$, there must exist an interval $\tau \in [0,T]$ and $\epsilon>0$, $\gamma(\tau) = x$ for almost every $[0,\tau]$ and $\gamma(t) \in M$ for almost every $t \in (\tau +\epsilon]$. Clearly, $V(t,\gamma(t))$ is differentiable for almost every $t \in [0,T]$ since $\gamma$ is Lipschitz. Moreover, $\frac{d}{dt}V(t,\gamma(t)) =0$ for $t \in [0,\tau]$ since $\gamma(t)$ remains equal to $x$. Additionally, we know that $V(t,x)$ is differentiable at $x = \gamma(t)$ for almost every $t \in (\tau,\tau+\epsilon]$ since $\gamma$ satisfies the differential equation \eqref{eq:optimalODE}.
 
Since $V$ is locally semiconcave on $M$ from \cite{rifford2009stabilization}[Lemma 2.2] we can compute the time derivative of $V(t,y)$ along $y = \gamma(t)$,
\begin{align}
\sum_{i=1}^m\frac{|\alpha_i(t)|^2}{2}+\frac{d}{dt}V(t,\gamma(t)) &= \partial_tV(t,\gamma(t)) + H(\gamma(t),\partial_xV(t,\gamma(t))) = 0
\end{align}
 for almost every $t \in (\tau,\tau+\epsilon]$.
where $\alpha_i(t) = -g^T_i(\gamma(t)) \partial_x   V(t,\gamma(t))$.
Extending $\alpha_i(t)$ to the entire interval $[0,\tau+\epsilon]$ by setting $\alpha_i(t) = 0$ for all $ t\in [0, \tau]$ we see that,
\[V(\tau+\epsilon,\gamma(\tau+\epsilon)) = V(0,\gamma(0)) + \frac{1}{2}\int_0^T \sum_{i=1}^m|\alpha_i(t)|^2 dt\]
 This implies that $\gamma(t)$ is an optimal trajectory connecting $\gamma(0)$ and $\gamma(\tau+\epsilon)$. However, note that the control $\hat{\alpha}:[0,\tau+\varepsilon] \rightarrow \R^m$ defined by 
 \[\hat{\alpha}(t) = \alpha(\tau+\frac{\varepsilon t}{  \tau+\varepsilon}), ~ \forall t \in [0,\tau+\varepsilon] \]
 also transfers the control system \eqref{eq:ctrsyspre} from $\gamma(0) $ to $\gamma(\tau+\varepsilon)$ over the time interval $[0,\tau +\varepsilon]$ since the control system is driftless. However, $\int_0^{\tau +\varepsilon} |\alpha(t)|^2dt  > \frac{\varepsilon}{\tau +\varepsilon} \int_0^{\tau +\varepsilon} |\alpha(t)|^2 dt = \int_0^{\tau +\varepsilon} |\hat{\alpha}(t)|^2 dt $. This implies that $\gamma(t)$ cannot be an optimal trajectory and giving us a contradiction. Therefore, the solutions of \eqref{eq:optimalODE} for $\mu_0$ almost every $x \in S$ are unique. Now consider the case when $x \in M$. Similarly, any other trajectory $\gamma(t)$ is optimal. However, $\partial_xf$ is differentiable $\mu_0$ almost every $M$. Hence, the optimal trajectories as a solution to the Pontryagin system \eqref{eq:pmpsys} must be unique since the corresponding flow of the Hamiltonian systems is Lipschitz. This enables us to conclude that the solutions of \eqref{eq:optimalODE} are unique $\mu_0$ almost every $\Rd$.
 
 Now, we can apply \cite{ambrosio2008transport}[Theorem 3.1] to conclude that weak solutions of the corresponding continuity equation are unique. 
 \end{proof} 

A straightforward corollary of the above result is the following result.

 \begin{corollary}
Given \ref{asmp:costco} (vector-fields with sub-linear growth), Suppose $\mu_0, \mu_T \in \mathcal{P}_1(\Rd)$ are absolutely continuous with respect to the Lebesgue measure and compactly supported. Additionally, assume that the system is a controllable driftless system according to Assumption \ref{asmp:ctbdrif} that is $2-$generating and the corresponding cost function induces a complete metric on $\Rd$.  Suppose $(u,\mu)$ is the solution of the Benamou Brenier problem for $L(t,x,u) = \frac{1}{2}\sum_{i=1}^m|u_i|^2$, for all $(t,x,u) \in I \times \Rd \times U$ with $U = \R^d$. 

Then for the feedback control $u$, the weak solution $\hat{\mu} \in C([0,T];\mathcal{P}_1(\Rd)$ to the continuity equation is unique.
\end{corollary}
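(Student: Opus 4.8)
The plan is to derive this corollary as an immediate specialization of Theorem \ref{thm:uniquesol1}. The two statements share every hypothesis verbatim---sub-linear growth of the vector fields (Assumption \ref{asmp:costco}), absolute continuity and compact support of $\mu_0,\mu_T$, the controllable driftless structure (Assumption \ref{asmp:ctbdrif}), completeness of the induced metric, and the minimum-energy running cost $L(t,x,u)=\tfrac12\sum_{i=1}^m|u_i|^2$---with a single exception. Theorem \ref{thm:uniquesol1} requires the abstract condition that no pair $(x,y)\in\Rd\times\Rd\setminus D\times D$ admit a singular minimizing curve, whereas the present corollary imposes the concrete geometric condition that the distribution be $2$-generating. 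Thus the entire content of the proof reduces to showing that the latter implies the former.

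First I would invoke the classical fact from sub-Riemannian geometry, recalled in the discussion preceding this corollary, that a $2$-generating driftless system---one for which $\cup_{i=0}^{2}\mathcal{V}^i$ has rank $d$ at every point---admits no nontrivial singular minimizing curves. For such step-$2$ bracket-generating distributions, every abnormal (singular) extremal is constant, so no two distinct points $x\neq y$ can be joined by a singular minimizer; see \cite{figalli2010mass,rifford2014sub}. Consequently there is no $(x,y)\in\Rd\times\Rd\setminus D\times D$ whose minimizing trajectory-control pair is singular, which is precisely the hypothesis required by Theorem \ref{thm:uniquesol1}.

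Having matched all hypotheses, I would then apply Theorem \ref{thm:uniquesol1} directly: the feedback control $u$ arising from the Benamou-Brenier solution yields a continuity equation whose weak solution $\hat{\mu}\in C([0,T];\mathcal{P}_1(\Rd))$ is unique. No new estimates, no new construction, and no re-derivation of the viscosity-solution machinery are needed. The only step carrying any mathematical weight is the passage from $2$-generating to the absence of singular minimizers, and this is a known structural result rather than something to be proved here; everything else is a verification that the remaining assumptions are inherited unchanged. In this sense the corollary is essentially a restatement of Theorem \ref{thm:uniquesol1} with a checkable sufficient condition substituted for the abstract non-singularity hypothesis.
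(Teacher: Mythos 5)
Your proposal is correct and matches the paper's intent exactly: the paper gives no explicit proof, calling this a ``straightforward corollary'' of Theorem \ref{thm:uniquesol1}, and the discussion preceding the corollary already records that $2$-generating driftless systems admit no singular minimizing curves (via \cite{figalli2010mass,rifford2014sub}), which is precisely the reduction you carry out. Nothing further is needed.
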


Next, we state a similar result for the linear quadratic case. In this situation, we can guarantee that the control laws have some additional regularity. Particularly, the control laws are locally in of bounded variation for almost every time. We recall that a function $f \in L^1(\Omega)$ is in $BV_{\rm loc}(\Rd)$ if its distributional derivative is a finite Radon measure.

\begin{theorem}
\label{thm:uniquesol2}
Let $\mu_0, \mu_T \in \mathcal{P}_2(\Rd)$. Suppose the control system \eqref{eq:ctrsyspre} is a controllable LTI  according to Assumption \eqref{asmp:ctbllin}. Let the running cost be given by $L(t,x,u) = \langle x,Qx  \rangle_2 +\langle u,Ru  \rangle_2$ for some postive semidefinite matrix $Q \in \R^{d  \times d}$ and positive definite matrix $R \in \R^{n \times n}$.  Suppose $(u,\mu)$ is the solution of the Benamou Brenier problem.  Then  $u(t,\cdot)$ is in $BV_{\rm loc}(\Rd)$ for almost every $ t \in [0,T]$. 

Moreover, for the feedback control $u$, the weak solution $\hat{\mu} \in C([0,T];\mathcal{P}_1(\Rd)$ to the continuity equation is unique.
\end{theorem}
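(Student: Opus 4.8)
The plan is to mirror the architecture of the proof of Theorem~\ref{thm:uniquesol1}, substituting the explicit quadratic geometry of the linear--quadratic problem for the sub-Riemannian structure, and to treat the two assertions ($BV_{\mathrm{loc}}$ regularity and uniqueness) in turn. First I would record the structural input specific to the controllable LTI setting. By \cite{hindawi2011mass} the cost has the closed form $c(x,y) = \langle x, Dx\rangle_2 - \langle x, Ey\rangle_2 + \langle y, Fy\rangle_2$ with $D,F$ positive definite and $E$ invertible, so $c$ is a smooth quadratic whose mixed Hessian $\partial^2_{xy}c = -E$ is invertible; the associated Monge problem therefore admits a unique optimal map $T$ by \cite{agrachev2009optimal}[Theorem~4.1], exactly as in the preceding corollary. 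As in Theorem~\ref{thm:uniquesol1} I would pass to the dual Kantorovich problem, take the optimizers to be a conjugate pair $(f,f^c)$ with $f^c(y) = \inf_x c(x,y)+f(x)$, and introduce the value function $V(t,y) = \inf\{\int_0^t L\,ds + f(\gamma(0)) : \gamma(t)=y\}$, a viscosity solution of $\partial_t V + H(y,\partial_x V) = 0$ with $V(0,\cdot)=f$ and $V(T,\cdot)=f^c$. For this running cost the Hamiltonian is the smooth quadratic $H(x,p) = \langle p, Ax\rangle_2 - \langle x, Qx\rangle_2 + \tfrac14\langle p, BR^{-1}B^\top p\rangle_2$, the optimal feedback is the fixed linear image $u(t,x) = \tfrac12 R^{-1}B^\top \partial_x V(t,x)$ of the spatial gradient, and the characteristic (Pontryagin) system is the \emph{linear} Hamiltonian flow $\dot\omega = A\omega + \tfrac12 BR^{-1}B^\top p$, $\dot p = 2Q\omega - A^\top p$ with data $(\omega(0),p(0)) = (x,\partial_x f(x))$.

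For the $BV_{\mathrm{loc}}$ claim I would argue that $V(t,\cdot)$ is locally semiconcave for a.e.\ $t$. Since $c(x,\cdot)$ is, for every $x$, a quadratic with the fixed Hessian $2F$, it is semiconcave with the uniform constant $\|F\|$; hence the infimum $f^c$ is semiconcave with the same constant, and by the symmetric argument (using the fixed Hessian $2D$ of $c(\cdot,y)$) so is $f$. This terminal and initial semiconcavity propagates to local semiconcavity of $V$ on $[0,T]\times\Rd$ by the Cannarsa--Sinestrari theory \cite{cannarsa2004semiconcave}[Theorem~7.4.11], the only point needing care being that $U=\Rd$ is unbounded: here the explicit LQ optimal control furnishes a uniform $L^\infty$ bound over the compact supports of $\mu_0,\mu_T$, playing the role that \cite{cannarsa2008semiconcavity}[Proposition~3.3] played in the driftless case. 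Because the distributional gradient of a locally semiconcave function is of locally bounded variation, and $u(t,\cdot) = \tfrac12 R^{-1}B^\top\partial_x V(t,\cdot)$ is a fixed linear image of $\partial_x V(t,\cdot)$, it follows that $u(t,\cdot)\in BV_{\mathrm{loc}}(\Rd)$ for a.e.\ $t\in[0,T]$.

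For uniqueness I would, as in Theorem~\ref{thm:uniquesol1}, reduce to uniqueness of the feedback ODE $\dot\gamma = A\gamma + Bu(t,\gamma)$ for $\mu_0$-almost every initial point and then invoke \cite{ambrosio2008transport}[Theorem~3.1]. By the PMP--HJB correspondence \cite{cannarsa2004semiconcave}[Theorem~7.4.16] one has $p_x(t) = \partial_x V(t,\omega_x(t))$ along the optimal trajectory, so $\omega_x$ solves the feedback ODE and is the projection of the linear Hamiltonian flow issued from $(x,\partial_x f(x))$; since $f$ is semiconcave it is differentiable $\mu_0$-a.e., and the linearity of the Hamiltonian system makes this trajectory unique wherever it is defined. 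If $\gamma$ is any absolutely continuous solution of the feedback ODE with $\gamma(0)=x$, then differentiating $t\mapsto V(t,\gamma(t))$ (which is Lipschitz, hence a.e.\ differentiable) via the semiconcave chain rule \cite{rifford2009stabilization}[Lemma~2.2] together with the HJB equation yields $V(T,\gamma(T)) - V(0,\gamma(0)) = \int_0^T L(\gamma(t),u(t,\gamma(t)))\,dt$, so that $\gamma$ realizes the optimal transport cost between its endpoints and is therefore an optimal trajectory; by uniqueness of optimal trajectories $\gamma$ coincides with $\omega_x$. Thus the feedback ODE has a unique solution for $\mu_0$-a.e.\ $x$, and since $f(x,u(t,x)) = Ax + Bu(t,x)$ has sublinear growth (Assumption~\ref{asmp:sublin}) with $u(t,\cdot)\in BV_{\mathrm{loc}}$, \cite{ambrosio2008transport}[Theorem~3.1] yields uniqueness of the weak solution $\hat\mu\in C([0,T];\mathcal{P}_1(\Rd))$.

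The main obstacle is the semiconcavity/verification package rather than the algebra. Establishing local semiconcavity of $V$ up to the terminal time, and then justifying the chain-rule identity $V(T,\gamma(T)) - V(0,\gamma(0)) = \int_0^T L\,dt$ for an \emph{arbitrary} Lipschitz solution $\gamma$ that need only meet the full-measure set of differentiability points of $V$, is the delicate part; one must also control the negligible non-differentiability set of $f$ so that the linear Hamiltonian flow, and hence the optimal curve, is well defined $\mu_0$-almost everywhere. The drift $Ax$ removes the time-reparametrization trick available in the driftless case of Theorem~\ref{thm:uniquesol1}, but the global uniqueness of the linear Hamiltonian system supplies the needed uniqueness of optimal curves in its place.
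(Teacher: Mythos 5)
Your proposal is correct and follows essentially the same route as the paper: the paper likewise derives $BV_{\rm loc}$ regularity from the explicit quadratic representation of $c$ in \cite{hindawi2011mass}, the resulting local semiconcavity of the dual potentials and of $V$ via \cite{cannarsa2004semiconcave}[Theorem~7.4.11], Aleksandrov's theorem for concave functions, and the linear feedback form $u_i(t,x)=-R^{-1}b_i^{T}\partial_x V(t,x)$; for uniqueness the paper simply states that the argument of Theorem~\ref{thm:uniquesol1} carries over. Your one genuine addition is the observation that the time-reparametrization trick used on the static set in Theorem~\ref{thm:uniquesol1} is unavailable in the presence of the drift $Ax$, and that global uniqueness of the linear Hamiltonian (Pontryagin) flow must be substituted for it -- a point the paper's ``follows exactly as in the previous theorem'' glosses over, so your version is actually the more careful of the two on this step.
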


\begin{proof}
The proof of this case follows exactly as in the previous theorem. The only difference is the extra regularity of the control law. Toward this end, we note that the cost function $c(x,y) $ is locally semiconcave due the representation $(c(x,y) = \langle x,Dx\rangle_2 - \langle x,Ey\rangle_2+\langle y,Fy \rangle_2 )$ derived in \cite{hindawi2011mass}. This implies that the optimal solutions of the dual Kantorovich problem are also locally semiconcave everywhere.  Hence, the value function defined by
\[V(t,y) = \inf_{(\gamma,u) \in \Omega; \gamma(t) = y} \int_0^t L(x(t),u(t))dt + f(\gamma(0))\]
also inherits the local semiconcavity of $c(x,y)$ due to the results of \cite{cannarsa2004semiconcave}[Theorem 7.4.11]. Therefore, $\partial_xV(t,x)$ is defined $\mu_t$ almost everywhere and $\partial_xV(t,x)$ is in $BV_{\rm loc}(\Rd;\Rd)$, as $V$ is locally a sum of a smooth function and a concave function, for almost every $ t \in [0,T]$. Concave functions are twice differentiable by Aleksandorov's theorem (\cite{evans2018measure}[Theorem 6.9]). Therefore, the optimal control laws in this case given by
\[ u_i(t,x) = -R^{-1}b_i^T \partial_xV(t,x),\]
 are also in $ BV_{\rm loc}(\Rd)$ for almost every $t \in [0,T]$.
\end{proof}

\section{Acknowledgement}

The author thanks Matt Jacobs and Emmanuel Tr\'{e}lat for helpful comments and
suggestions regarding properties of continuity equation and HJB equations.

\bibliographystyle{plain}
\bibliography{ref}
\end{document}